\def\mathclap#1{\text{\hbox to 0pt{\hss$\mathsurround=0pt#1$\hss}}}
\newtheorem{theorem}{Theorem}[section]
\newtheorem*{theorem*}{Theorem}
\newtheorem{lemma}[theorem]{Lemma}
\newtheorem*{lemma*}{Lemma}
\newtheorem{claim}[theorem]{Claim}
\newtheorem{claim*}{Claim}
\newtheorem{prop}[theorem]{Proposition}
\newtheorem{cory}[theorem]{Corollary}
\theoremstyle{definition}
\newtheorem{definition}[theorem]{Definition}
\newtheorem{example}[theorem]{Example}
\newtheorem*{conventions*}{Conventions}
\newtheorem{remark}[theorem]{Remark}
\newtheorem{remarks}[theorem]{Remarks}
\newcommand{\beq}[1][]{ 
    \ifthenelse{\isempty{#1}}{\begin{equation}}{\begin{equation}\label{#1}} 
}
\newcommand{\eeq}{\end{equation}}
\newcommand{\bdefi}[1][]{ 
    \ifthenelse{\isempty{#1}}{\begin{definition}}{\begin{definition}\label{#1}} 
}
\newcommand{\edefi}{\end{definition}}
\newcommand{\bprop}[1][]{ 
    \ifthenelse{\isempty{#1}}{\begin{prop}}{\begin{prop}\label{#1}} 
}
\newcommand{\eprop}{\end{prop}}
\newcommand{\btheo}[1][]{ 
    \ifthenelse{\isempty{#1}}{\begin{theorem}}{\begin{theorem}\label{#1}} 
}
\newcommand{\etheo}{\end{theorem}}
\newcommand{\blemm}[1][]{ 
    \ifthenelse{\isempty{#1}}{\begin{lemma}}{\begin{lemma}\label{#1}} 
}
\newcommand{\elemm}{\end{lemma}}
\newcommand{\bcory}[1][]{ 
    \ifthenelse{\isempty{#1}}{\begin{cory}}{\begin{cory}\label{#1}} 
}
\newcommand{\ecory}{\end{cory}}
\newcommand{\brema}[1][]{ 
    \ifthenelse{\isempty{#1}}{\begin{remark}}{\begin{remark}\label{#1}} 
}
\newcommand{\erema}{\end{remark}}
\newcommand{\bpf}[1][]{ 
    \ifthenelse{\isempty{#1}}{\begin{proof}}{\begin{proof}[#1]} 
}
\newcommand{\epf}{\end{proof}}
\newtheorem*{rep@theorem}{\rep@title}
\newcommand{\newreptheorem}[2]{%
\newenvironment{rep#1}[1]{%
 \def\rep@title{#2 \ref{##1}}%
 \begin{rep@theorem}}%
 {\end{rep@theorem}}}
\newcommand{\be}{{\beta}}
\newcommand{\Om}{{\Omega}}
\newcommand{\eps}{{\varepsilon}}
\newcommand{\De}{{\Delta}}
\newcommand{\ga}{{\gamma}}
\newcommand{\Ga}{{\Gamma}}
\newcommand{\si}{{\sigma}}
\newcommand{\Si}{{\Sigma}}
\renewcommand{\phi}{{\varphi}}
\newcommand\im{\operatorname{im}}
\renewcommand{\ge}{\geqslant}
\renewcommand{\le}{\leqslant}
\newcommand*\mcapinn[2]{\vcenter{\hbox{$\mathsurround=0pt
  \ifx\displaystyle#1\textstyle\else#1\fi\bigcap$}}}
\newcommand*\mcupinn[2]{\vcenter{\hbox{$\mathsurround=0pt
  \ifx\displaystyle#1\textstyle\else#1\fi\bigcup$}}}
\newcommand{\D}{{\texttt{\large D}}}
\newcommand\R{\mathbb R}
\newcommand\N{\mathbb N}
\newcommand{\actson}{\curvearrowright}
\newcommand{\cal}[1]{{\mathcal #1}}
\newcommand*\if@single[3]{%
  \setbox0\hbox{${\mathaccent"0362{#1}}^H$}%
  \setbox2\hbox{${\mathaccent"0362{\kern0pt#1}}^H$}%
  \ifdim\ht0=\ht2 #3\else #2\fi
  }
\newcommand*\rel@kern[1]{\kern#1\dimexpr\macc@kerna}
\newcommand*\widebar[1]{\@ifnextchar^{{\wide@bar{#1}{0}}}{\wide@bar{#1}{1}}}
\newcommand*\wide@bar[2]{\if@single{#1}{\wide@bar@{#1}{#2}{1}}{\wide@bar@{#1}{#2}{2}}}
\newcommand*\wide@bar@[3]{%
  \begingroup
  \def\mathaccent##1##2{%
    \if#32 \let\macc@nucleus\first@char \fi
    \setbox\z@\hbox{$\macc@style{\macc@nucleus}_{}$}%
    \setbox\tw@\hbox{$\macc@style{\macc@nucleus}{}_{}$}%
    \dimen@\wd\tw@
    \advance\dimen@-\wd\z@
    \divide\dimen@ 3
    \@tempdima\wd\tw@
    \advance\@tempdima-\scriptspace
    \divide\@tempdima 10
    \advance\dimen@-\@tempdima
    \ifdim\dimen@>\z@ \dimen@0pt\fi
    \rel@kern{0.6}\kern-\dimen@
    \if#31
      \overline{\rel@kern{-0.6}\kern\dimen@\macc@nucleus\rel@kern{0.4}\kern\dimen@}%
      \advance\dimen@0.4\dimexpr\macc@kerna
      \let\final@kern#2%
      \ifdim\dimen@<\z@ \let\final@kern1\fi
      \if\final@kern1 \kern-\dimen@\fi
    \else
      \overline{\rel@kern{-0.6}\kern\dimen@#1}%
    \fi
  }%
  \macc@depth\@ne
  \let\math@bgroup\@empty \let\math@egroup\macc@set@skewchar
  \mathsurround\z@ \frozen@everymath{\mathgroup\macc@group\relax}%
  \macc@set@skewchar\relax
  \let\mathaccentV\macc@nested@a
  \if#31
    \macc@nested@a\relax111{#1}%
  \else
    \def\gobble@till@marker##1\endmarker{}%
    \futurelet\first@char\gobble@till@marker#1\endmarker
    \ifcat\noexpand\first@char A\else
      \def\first@char{}%
    \fi
    \macc@nested@a\relax111{\first@char}%
  \fi
  \endgroup
}
\newcommand{\wb}[1]{{\widebar {#1}}}
\newcommand\sscdot{{\cdot}}
\newcommand\sstimes{{\times}}
\newcommand\ssin{{\,\in\,}}
\newcommand\sscap{{\,\cap\,}}
\newcommand{\susbet}{\subset}
\newcommand{\bclai}[1][]{ 
    \ifthenelse{\isempty{#1}}{\begin{claim}}{\begin{claim}\label{#1}} 
}
\newcommand{\eclai}{\end{claim}}
\newcommand\MT{\operatorname{M}}
\newcommand\I{\operatorname{I}}
\newcommand\B{\operatorname{B}}
\renewcommand\P{\mathcal P}
\newcommand\Rel{\operatorname{Rel}}
\newcommand\Var{\operatorname{Var}}
\newcommand\Cl{\operatorname{Cla}}
\begin{document}

\begin{center}\textbf{Note that this manuscript is fully superseded by preprint arxiv:2203.05888}\\[2cm]\end{center}


\title{Borel version of the Local Lemma}
\author[E. Csóka, Ł. Grabowski, A. Máthé, O. Pikhurko, K. Tyros]{Endre Csóka, Łukasz Grabowski, András Máthé, Oleg Pikhurko, Konstantinos Tyros}

\begin{abstract}
We prove a Borel version of the local lemma, i.e.~we show that, under suitable assumptions, if the set of variables in the local lemma has a structure of a Borel space, then there exists a satisfying assignment which is a Borel function. The main tool which we develop for the proof, which is of independent interest, is a parallel version of the Moser-Tardos algorithm which uses the same random bits to resample clauses that are far enough in the dependency graph. 
\end{abstract}
\maketitle

\setcounter{tocdepth}{2}
\tableofcontents

\newcommand{\rnd}{\normalfont{\texttt{rnd}}}
\newcommand{\RND}{\normalfont{\texttt{RND}}}

\section{Introduction}

\vspace{5pt}\textbf{Conventions.} Throughout the article we frequently use the fact that any natural number is equal to the set of all smaller natural numbers. In particular a natural number is not only an element of $\N:= \{0,1,\ldots\}$ but also a subset of $\N$. 

A vertex in a graph is allowed to have at most one self-loop. The \textit{neighbourhood} of a vertex is the set of all vertices connected to that vertex by an edge. The \textit{degree} of a vertex is the cardinality of its neighbourhood. In particular the degree of a vertex with a self-loop and no other edges is equal to $1$. 

\vspace{5pt}\textbf{Local Lemma.} Let us start by recalling a version of the local lemma. The first version of the local lemma was proved by Erd{\H{o}}s and Lov{\'a}sz \cite{MR0382050}. The version we present follows from the subsequent improvement of Lov{\'a}sz (published by Spencer~\cite{MR0491337}). For more historical context we refer to the classical exposition in~\cite{MR2437651}. 

Let $G$ be a graph and let $b$ be a natural number greater than $1$. The elements of the vertex set $V(G)$ should be thought of as variables which can take values in the set $b =\{0,1,2,\ldots,b-1\}$. Let $\mathbf R$ be a function whose domain is $V(G)$ and such that for $x\in V(G)$ we have that $\mathbf R(x)$ is a set of $b$-valued functions defined on the neighbourhood of $x$. Such a function $\mathbf R$ is an example of a \textit{local rule} on $G$. We say that $f\colon V(G) \to b$  \textit{satisfies $\mathbf R$} if for every $x\in V(G)$ the restriction of $f$ to the neighbourhood of $x$ belongs to $\mathbf R(x)$. 

The local lemma gives a condition under which a satisfying assignment exists. For $x\in V(G)$ let $p(x)$ be the probability of failure at $x$, i.e.~$p(x) := 1 - \frac{|\mathbf R(x)|}{b^{\deg(x)}}$, where $\deg(x)$ is the degree of $x$. Let $\Rel(G)$ be the graph whose vertex set is $V(G)$ and such that there is an edge between $x$ and $y$ if and only if the neighbourhoods of $x$ and $y$ are not disjoint (we allow $x$ and $y$ to be equal). Finally,  let $\De$ be the maximal vertex  degree in $\Rel(G)$. 

\begin{theorem}[Lov{\'a}sz's Local Lemma~\cite{MR0491337}]\label{blll} 
If for all $x\in V(G)$ we have $p(x) < \frac{1}{e\De}$ then there exists $f\colon V(G)\to b$ which satisfies $\mathbf R$.
\end{theorem}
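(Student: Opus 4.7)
My plan is to reduce this to the classical event-based Lov\'asz Local Lemma. Let $f\colon V(G)\to b$ be a uniform random assignment, i.e.\ let the values $(f(v))_{v\in V(G)}$ be i.i.d.\ uniform on $b$, and let $A_x$ be the event that $f|_{N(x)}\notin \mathbf R(x)$, so $\Pr[A_x]=p(x)$ and an $\mathbf R$-satisfying assignment is exactly an outcome in $\mcap_{x\in V(G)}\ov{A_x}$. The structural point is that $\Rel(G)$ is a dependency graph for $(A_x)$: if $T\subseteq V(G)$ contains no $\Rel(G)$-neighbour of $x$, then $N(x)\cap N(y)=\emptyset$ for every $y\in T$, so $A_x$ is a function of $(f(v))_{v\in N(x)}$ while $\{A_y:y\in T\}$ is determined by the disjoint, independent family $(f(v))_{v\in\bigcup_{y\in T} N(y)}$; hence $A_x$ is independent of the $\sigma$-algebra generated by $\{A_y:y\in T\}$.

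Assuming first that $V(G)$ is finite, the standard induction on $|S|$ then shows
\[
\Pr\Big[A_x\ \Big|\ \mcap_{y\in S}\ov{A_y}\Big] \le \frac{1}{\De} \qquad\text{for all } x\in V(G)\setminus S.
\]
The inductive step splits $S$ into $S_1$, the $\Rel(G)$-neighbours of $x$ in $S$, and $S_2=S\setminus S_1$. Bayes' rule and the independence observation bound the resulting numerator by $\Pr[A_x\mid \mcap_{y\in S_2}\ov{A_y}]=p(x)$, and the chain rule combined with the inductive hypothesis bounds the denominator from below by $(1-1/\De)^{|S_1|}\ge (1-1/\De)^{\De-1}\ge 1/e$. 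The assumption $p(x)<1/(e\De)$ then closes the induction, and unwinding the conditioning yields $\Pr[\mcap_x \ov{A_x}]>0$ and hence a satisfying $f$.

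For infinite $V(G)$, the probabilistic argument no longer supplies a positive probability in general, and one appeals to compactness: $b^{V(G)}$ is compact in the product topology, each constraint set $\{f:f|_{N(x)}\in\mathbf R(x)\}$ is closed since it depends on finitely many coordinates, and the finite case shows every finite sub-collection of these closed sets has non-empty intersection; hence so does the whole family. I expect the only mildly subtle point to be ensuring $|S_1|\le \De-1$ in the inductive step so that $(1-1/\De)^{\De-1}\ge 1/e$ applies. This follows from the paper's self-loop conventions: whenever $p(x)>0$ one must have $N(x)\neq\emptyset$, and then $x$ itself is a $\Rel(G)$-neighbour of $x$, accounting for one of the $\De$ slots and forcing $|S\cap\Rel(G)\text{-nbd}(x)|\le \De-1$ since $x\notin S$.
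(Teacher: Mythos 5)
Your argument is correct, and it does not coincide with how the paper handles this statement. The paper states Theorem~\ref{blll} as a cited result from \cite{MR0491337}; the only in-text derivation it offers is Remark~(iii), which observes that when $V(G)$ is finite the statement is a special case of Theorem~\ref{into}, and Theorem~\ref{into} is in turn proved via the Moser--Tardos resampling algorithm with limited randomness and the landscape/witness-tree counting of Section~3 and Lemmas~\ref{ghg} and~\ref{l-fff}. So the paper's route, even to the finite case, goes through an algorithmic argument, and the infinite case is left implicit. Your route is the classical existential one: the inductive bound $\Pr[A_x \mid \bigcap_{y\in S}\ov{A_y}]\le 1/\De$, closed by multiplying $1-1/\De$ over all vertices, followed by compactness of $b^{V(G)}$ in the infinite case. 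The trade-off is clear: the paper's machinery yields a \emph{Borel} satisfying assignment (obtained with high probability as a pointwise limit of the Moser--Tardos iterates), which a compactness selection cannot give, whereas your argument is much shorter, self-contained, and is the only one of the two that directly covers infinite $V(G)$. The care point you flag --- that $|S_1|\le\De-1$ --- is exactly the one the authors address in Remark~(i): under the paper's conventions, $p(x)>0$ forces $N(x)\neq\emptyset$, hence a self-loop at $x$ in $\Rel(G)$; that self-loop is counted in $\deg_{\Rel(G)}(x)\le\De$ yet $x\notin S$, so at most $\De-1$ slots remain for $S_1$, which makes $(1-1/\De)^{|S_1|}\ge(1-1/\De)^{\De-1}\ge 1/e$ available and closes the induction.
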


In order to motivate our \textit{Borel version} of Theorem~\ref{blll}, let us recall a classical application of the local lemma to colorings of Euclidean spaces from~\cite{MR0382050}. A \textit{$b$-coloring} of $\R^n$ is a function $f\colon \R^n \to b$. We say that a set $U\subset \R^n$ is \textit{multicolored} with respect to a $b$-coloring if $U$ contains points of all $b$ colors.

\begin{cory}[\cite{MR0382050}]\label{kupa}
Let $T\subset \R^n$ be a finite set of vectors and let $b\in \N$ be such that $b(1-\frac{1}{b})^{|T|}< \frac{1}{e\sscdot|T|^2}$. Then there exists a $b$-coloring of $\R^n$ such that for every $x\in \R^n$ the set $x+T$ is multicolored.
\end{cory}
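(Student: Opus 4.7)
The plan is to recast ``every $x+T$ is multicolored'' as a local rule on a graph $G$ and invoke Theorem~\ref{blll}. A direct single-vertex-set attempt---take $V(G) = \R^n$ with edges $\{x,y\}$ whenever $y - x \in T \cup (-T)$ and a surjectivity constraint at each $x$---gives $N(x) = x + (T \cup -T)$ and only bounds $\De(\Rel(G))$ by $|(T \cup -T) - (T \cup -T)|$, which can be of order $3|T|^2$ and so does not match the hypothesis. I would therefore use a bipartite encoding that keeps the ``$+T$'' and ``$-T$'' directions separate.

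Explicitly, let $V(G) := V \sqcup C$ be two disjoint copies of $\R^n$, and place an edge between $x \in C$ and $y \in V$ precisely when $y - x \in T$. Then $x \in C$ has neighborhood $x + T \subset V$ and $y \in V$ has neighborhood $y - T \subset C$, each of size $|T|$. Define the local rule by letting $\mathbf R(x)$ for $x \in C$ be the set of surjective functions $x + T \to b$, and $\mathbf R(y)$ for $y \in V$ be the set of \emph{all} functions $y - T \to b$. By construction, any $f \colon V(G) \to b$ satisfying $\mathbf R$ has the property that $f|_V$, viewed as a $b$-coloring of $\R^n \cong V$, makes every translate $x + T$ multicolored.

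It then remains to check the hypothesis of Theorem~\ref{blll}. A union bound over the $b$ colors that might be missing on $x + T$ gives $p(x) \le b(1 - 1/b)^{|T|}$ for $x \in C$, while $p(y) = 0$ for $y \in V$. For the degree bound, neighborhoods of $V$-vertices sit in $C$ and vice versa, so in $\Rel(G)$ edges only connect vertices of the same type, and two same-type vertices are adjacent there iff their underlying $\R^n$-vectors differ by an element of $T - T$. Hence $\De(\Rel(G)) \le |T - T| \le |T|^2$, the assumed inequality upgrades to $p(\cdot) < 1/(e\De)$, Theorem~\ref{blll} produces a satisfying $f$, and $f|_V$ is the desired coloring.

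The substantive step is finding the right graph; the bipartite split is what keeps $\De$ at $|T|^2$ rather than blowing up under symmetrization that is forced by working with an undirected graph on a single copy of $\R^n$. Once it is in place, the bounds on $p$ and $\De$ are both immediate.
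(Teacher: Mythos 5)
Your proof is correct, and both the failure‐probability bound $b(1-1/b)^{|T|}$ and the dependency‐degree bound $|T-T|\le |T|^2$ coincide with the paper's. The difference is only the encoding: the paper keeps a single copy of $\R^n$ but uses an \emph{oriented} graph with edges $(x,x+t)$, $t\in T$, and the local rule $\mathbf R(x)$ is a subset of $b^{\Var(x)}$ where $\Var(x)=x+T$ is the out-neighbourhood; the dependency graph $\Rel(G)$ then joins $x,y$ exactly when $\Var(x)\cap\Var(y)\ne\emptyset$, i.e.\ $y-x\in T-T$, so $\De\le|T|^2$. You achieve the same directional asymmetry with an undirected bipartite clause/variable split, which is the same trick used in the paper's $3$-SAT example and is entirely equivalent. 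The pitfall you flag for the undirected single-copy graph (neighbourhood $x+(T\cup-T)$, degree up to $\sim 4|T|^2$, and, more importantly, a constraint that does not even match the statement) is genuine; the paper avoids it not by bipartition but by reading ``neighbourhood'' as the out-neighbourhood $\Var(x)$, in line with its formal definition of a local rule as a subset of $b^{\Var(x)}$. So same proof, two interchangeable encodings of the variable/clause asymmetry.
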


\bpf
We define a graph $G$ as follows. Let $V(G) := \R^n$ and let $(x,y)\in E(G)$ if for some $t\in T$ we have $x+t =y$. We define a local rule $\mathbf R$ by letting $\mathbf R(x)$ be the set of all surjections from the neighbourhood of $x$ to $b$. It is clear that in order to prove the corollary we need to find a function $f\colon V(G)\to b$ which satisfies $\mathbf R$.

It is easy to bound the maximal degree in $\Rel(\cal G)$ by $|T|^2$. Since the image of a non-surjection is contained in a set with $b-1$ elements, we see that for all $x \ssin V(G)$ the probability $p(x)$ of failure is bounded from above by $\frac{b(b-1)^{|T|}}{|b^{T}|} = b(1-\frac{1}{b})^{|T|}$. Thus, we can apply Theorem~\ref{blll} to finish the proof.
\epf

\vspace{5pt}\textbf{Borel Local Lemma.} Our Borel version of the local lemma allows to deduce that the $b$-coloring in Corollary~\ref{kupa} can be demanded to be a Borel function. 

Let $G$ be a graph as before, but let us additionally assume that $V(G)$ is a standard Borel space and that $\mathbf R$ is a \textit{Borel local rule}. Since it is notationally involved to precisely define what it means for a local rule to be Borel, we defer doing it to Section~\ref{mtal}. For now we only assure the reader that in all naturally occurring applications of the local lemma the local rules are in fact Borel. For example, the local rule in the proof of Corollary~\ref{kupa} is Borel.

We say that a graph $G$ is of \textit{uniformly subexponential growth} if for every $\eps>0$ there exists $r$ such that for all $R>r$ and  all vertices $v\in V(G)$ the set of vertices of $G$ at distance at most $R$ from $v$ has cardinality less than $(1+\eps)^R$.

\begin{theorem}[Borel Local Lemma]\label{into}
Let $G$ be graph such that $V(G)$ is a standard Borel space and let $\mathbf R$ be a Borel local rule on $G$. Furthermore let us assume that the graph $\Rel(G)$ is of uniformly subexponential growth, and let $\De$ be the maximal degree in $\Rel(G)$.

If for all $x\in V(G)$ we have $p(x) < \frac{1}{e\De}$ then there exists a Borel function $f\colon V(G)\to b$ which satisfies $\mathbf R$.
\end{theorem}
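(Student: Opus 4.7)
The plan is to adapt the Moser--Tardos resampling algorithm to the Borel setting. The classical algorithm repeatedly selects a single violated constraint and resamples the variables in its scope, but in a standard Borel space one cannot canonically single out one constraint at a time; the central tool, hinted at in the abstract, is therefore a parallel resampling scheme that can be described in a Borel-measurable way.

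\textbf{Parallel Moser--Tardos.} Equip each vertex $v\in V(G)$ with an auxiliary i.i.d.\ sequence $(\xi_{v,s})_{s\in\N}$ of uniform $b$-valued random variables, realised as a suitable Borel randomness layer on $V(G)$. Writing $N(x)$ for the neighbourhood of $x$, set $f_0(v):=\xi_{v,0}$ and inductively
\[
B_s := \{\, v\in V(G)\,:\,\exists\,x\text{ with }v\in N(x)\text{ and }f_s|_{N(x)}\notin \mathbf R(x)\,\},
\]
with $f_{s+1}(v):=\xi_{v,s+1}$ if $v\in B_s$ and $f_{s+1}(v):=f_s(v)$ otherwise. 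Since $\mathbf R$ is a Borel local rule, each $f_s$ and each $B_s$ is Borel.

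\textbf{Convergence via witness trees.} The probabilistic heart of the argument is to show that almost surely, for every $v$, the sequence $f_s(v)$ stabilises after finitely many steps. I would adapt the witness-tree method of Moser--Tardos to the parallel setting: each resampling of $v$ at time $s$ is certified by a finite rooted tree of causally preceding violated constraints in $\Rel(G)$. The uniformly subexponential growth of $\Rel(G)$ bounds the number of such trees of size $n$ by $(1+\eps)^n \De^n$ for any $\eps>0$ and $n$ large, while the independence of the bits $(\xi_{\cdot,s})_{s\ge 1}$ ensures that each particular tree is ``witnessed'' with probability at most $\prod_i p(x_i)\le (e\De)^{-n}$. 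Choosing $\eps$ small enough that $(1+\eps)/e<1$ makes the expected total weight of witness trees rooted at any fixed $v$ finite, and the Borel--Cantelli lemma then yields almost-sure stabilisation. The pointwise limit $f_\infty(v):=\lim_s f_s(v)$ is thus a Borel function satisfying $\mathbf R$ on a conull set.

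\textbf{Borel extraction, and the main obstacle.} The crucial final step is to upgrade this almost-sure satisfying assignment to a genuine Borel function $V(G)\to b$ defined \emph{everywhere}. The shared-random-bits structure emphasised in the abstract---two constraints far apart in $\Rel(G)$ draw on essentially the same random source, while only nearby ones require independent copies---is what reduces the effective dimension of the randomness enough to be realised by a single Borel map on $V(G)$, after which a Fubini-style argument selects a specific realisation for which the resulting $f_\infty$ is everywhere-defined and Borel in $v$. The two main technical difficulties I anticipate are: (i) the witness-tree bookkeeping for the parallel algorithm, since simultaneous resampling of overlapping violated constraints entangles the random bits in a way the sequential Moser--Tardos argument does not have to confront, and it is precisely the subexponential-growth hypothesis that must compensate for this entanglement; and (ii) the Borel (as opposed to merely measurable) realisation of the randomness layer on $V(G)$, which has to exploit the same parallel structure together with a careful Borel construction of the auxiliary bits.
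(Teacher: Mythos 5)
Your proposal has a genuine gap, and it sits at the very step you mark as ``the main obstacle'' and then treat as a fix-up: there is no Borel realisation of an i.i.d.\ randomness layer $(\xi_{v,s})_{v\in V(G),s\in\N}$ when $V(G)$ is an uncountable standard Borel space. Any Borel map $v\mapsto(\xi_{v,s})_s$ into $b^\N$ is a perfectly definable object and cannot behave like an independent family indexed by $V(G)$; and if you instead work with the product measure on $(b^\N)^{V(G)}$, the output of your algorithm is only a jointly measurable function of $(v,\text{configuration})$, and a Fubini argument lets you fix a configuration that is good almost everywhere, but not one for which the section is a Borel function defined on all of $V(G)$ (the space $(b^\N)^{V(G)}$ is not even standard Borel, and the section map still needs to read off uncountably many coordinates as $v$ varies). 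So the plan in your first two paragraphs does not get off the ground without the ``shared random bits'' mechanism you only mention at the end.

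In the paper that mechanism is not a patch but the entire architecture. One first chooses a Borel partition $\pi=(W_0,\dots,W_{p-1})$ of $V(\cal G)$ that is $r$-sparse (points in the same part are far apart in $\Rel(\cal G)$; such a partition exists for locally finite Borel graphs), and then the $k$-step algorithm is driven by a \emph{single finite seed} $\rnd\in b^{p\times k}$: two vertices in the same part draw from the same infinite sequence, indexed by their own resampling counters. Because a fixed seed $\rnd$ makes the algorithm a deterministic Borel procedure, the whole difficulty reduces to showing that the set of bad seeds in the finite-dimensional space $b^{p\times k}$ has small measure, after which one fixes a single good $\rnd$ and gets a Borel limit. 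This sharing invalidates naive witness-tree independence: the bits consumed by two far-apart constraints in the same part are literally equal, not independent. The remedy is the entropy-compression/injection argument of Theorem~\ref{main}: one builds a ``landscape'' (a forest in $\Canvas(\cal G)$ recording the resampling history), restricts it to a finite window $F$, and shows that the map $\rnd\mapsto(\pi(F_{-2}),\text{unused bits},\text{grounded landscape})$ is injective. The $r$-sparseness is what makes the bits of distinct vertices in $F_{-2}$ land in different seed rows and hence be recoverable.

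You have also misplaced the role of subexponential growth. The number of witness structures (``$\De$-labelled trees'') is bounded by the standard $(\De^\De/(\De-1)^{\De-1})^N$ count of Lemma~\ref{ghg}, exactly as in finite Moser--Tardos; subexponential growth of $\Rel(\cal G)$ plays no role there. Where it actually enters is Lemma~\ref{growth}: it guarantees, for the function $g(x)=\#\{\text{times }x\text{ appears in the landscape}\}$, the existence of a ball $B(y,r)$ in which $\sum_{B(y,r)}g < (1+\eps)\sum_{B(y,r-3)}g$. This is the analogue of a F\o lner-type window, and it is what lets the injection argument run on a \emph{finite} window while losing at most a factor $(1+\eps)$ in the compression count. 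Your bound ``$(1+\eps)^n\De^n$'' conflates these two quantities and would be incorrect. Finally, note that $B_s$ in your sketch is not the neighbourhood of a maximal \emph{independent} set of violated constraints in $\Rel(G)$; without that independence the witness bookkeeping (and the landscape structure in the paper) does not decompose correctly, because two overlapping violated constraints would each claim the same resampled variable.
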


This theorem follows from a more general Corollary~\ref{cll}. 

In the proof of Corollary~\ref{kupa} the graph $\Rel(G)$ is easily seen to be of uniformly subexponential growth.  Thus we obtain the following corollary of Theorem~\ref{into}.

\bcory[kuba]
In Corollary~\ref{kupa} we can additionally demand the $b$-coloring of $\R^n$ to be a Borel function.
\ecory

\begin{remarks}
\begin{enumerate}[(i), nosep,topsep=0pt,partopsep=0pt, itemsep=1mm, wide]
\item The assumption of the local lemma for example  in \cite[Theorem 1.5]{MR0491337} is that $p(x)$ should be less that $\frac{1}{e(\De+1)}$, as opposed to our $\frac{1}{e\De}$. The difference is only notational, and comes from the fact that we count self-loops when calculating vertex degrees.
\item One can weaken the assumption on $p(x)$ in the local lemma to $p(x)< \frac{(\De-1)^{\De-1}}{\De^\De}$, and this is best possible \cite{MR837067}. Corollary~\ref{cll} shows that the same is true in the case of our Borel local lemma.
\item If the set $V(G)$ of vertices is countable, it can be regarded as a standard Borel space when we equip it with the discrete Borel structure. As will turn out, in this situation all local rules on $G$ are Borel. If $V(G)$ is finite then $\Rel(G)$ is also finite, and hence of uniformly subexponential growth. Thus Theorem~\ref{into} includes Theorem~\ref{blll} as a special case when $V(G)$ is a finite set. 
\end{enumerate}
\end{remarks}

\textbf{The Moser-Tardos algorithm with limited randomness.} The technique we use to prove Theorem~\ref{into} is a modified Moser-Tardos algorithm, and it is of independent interest. 

The Moser-Tardos algorithm (MTA) is a randomized algorithm for \textit{finding} the satisfying assignment under the assumptions of the local lemma. A version of it has been first described by Moser~\cite{MR2780080}, and a modified version has been described by Moser and Tardos~\cite{MR2606086}. We refer to the introduction of~\cite{MR2606086} for the history of attempts to find a constructive version of the local lemma.

To motivate our modified MTA let us recall the parallel version of the MTA. Let us assume that the set  $V(G)$ of vertices is finite. We start by ``sampling'' each point of $V(G)$ at random, i.e.~we choose uniformly at random a function $f_0\in b^{V(G)}$. Now we choose a subset $W_0\subset V(G)$  which is maximal among the subsets of $V(G)$ satisfying the following two properties: 

\begin{enumerate}[(i),nosep]
\item The function $f_0$ violates the local rule at all points of $W_0$. More precisely, for all $x\in W_0$ we have that $f_0$ restricted to the neighbourhood of $x$ is \textit{not} an element of $\mathbf R(x)$.
\item $W_0$ is an independent set in the graph $\Rel(G)$. 
\end{enumerate}
We define $f_1$ by ``resampling $f_0$ at variables in $W_0$''. More precisely, we start by defining $X_0$ to be the union of neighbourhoods of points in $W_0$, and we let  $Y_0 := V(G)\setminus X_0$. Now, we define $f_1$ to be equal to $f_0$ on $Y_0$. Finally, we choose uniformly at random a function in $b^{X_0}$, and we define $f_1$ to be equal to that function on $X_0$. 

We repeat this procedure with $f_1$ in place of $f_0$, and so on, until we end up with a satisfying assignment. With overwhelming probability a satisfying assignment will be found in a time which is linear in $\log(|V(G)|)$.

Let us informally describe how we modify the MTA. We partition $V(G)$ into $p$ disjoint parts $V_0,\ldots, V_{p-1}$ with the property that for every $i\in p$ and all distinct $x,y\in V_i$ we have that $x$ and $y$ are at least $r$ far away from each other in the graph $\Rel(G)$, where the choice of $r$ depends only on the growth of the balls in $\Rel(G)$, but not on $|V(G)|$. Now we assign to each part $V_i$ a ``source of randomness'', i.e.~an element $\rnd$ of $b^\N$, and we use this single sequence for resamplings of \textit{all} points $x$ which are  in $V_i$. 

Thus the points which lie in the same parts are no longer resampled independently from each other. We refer to this version of the MTA as the \textit{MTA with limited randomness} and we describe it precisely in Section~\ref{mtal}. 

\vspace{5pt}
\textbf{Previous results and open questions.} To our best knowledge, there is no previous work which establishes any \textit{Borel} variants of the local lemma. 

However, measurable variants of the local lemma have been studied by Kun~\cite{1303.4982v2} and very recently by Bernshteyn \cite{1604.07349v2} (the present work has been carried out independently of \cite{1604.07349v2}).  Let us discuss those of the results of~\cite{1303.4982v2} and~\cite{1604.07349v2} which are related to the present work.  

We warn the reader that the following definition is not equivalent to the similar notion in \cite{1604.07349v2}.

\begin{definition}
Let $\Ga$ be a countable group, let $X$ be a standard Borel space, let $\nu$ be a probability measure on $X$ and let $\rho\colon \Ga\actson X$ be an action by measure-preserving Borel bijections. For  a sequence $\ga_0,\ldots, \ga_{k-1}$ of elements of $\Ga$ we define $G = G(\ga_0,\ldots, \ga_{k-1})$ to be the graph with $V(G) := X$ and $(x,y) \ssin E(G)$ if for some $i\ssin k$ we have  $\ga_i.x=y$. 

We say that \textit{the measurable local lemma holds for the action $\rho$} if for  all sequences $\ga_0,\ldots, \ga_{k-1}$ and all Borel local rules on $G(\ga_0,\ldots, \ga_{k-1})$ such that $p(x) < \frac{1}{e\De}$ for all $x\in X$, there exists a measurable function $f\colon X \to b$  which satisfies $\mathbf R$. 
\end{definition}

The methods which we use to prove Theorem~\ref{into} can be rather easily modified to prove the following theorem.

\begin{theorem}[Measurable Local Lemma]\label{m-into}
Let $\Ga$ be a countable amenable group, let $X$ be a standard Borel space, let $\nu$ be a probability measure on $X$ and let $\rho\colon \Ga\actson X$ be an action by measure-preserving Borel bijections which is essentially free, i.e.~for $\nu$-almost all $x\ssin X$ we have that the map $\Ga\to X$ given by $\ga\mapsto \ga.x$ is a bijection. Then the measurable local lemma holds for $\rho$. 
\end{theorem}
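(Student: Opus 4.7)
The plan is to adapt the proof of Theorem~\ref{into}, which runs the Moser--Tardos algorithm with limited randomness (MTAL) of Section~\ref{mtal}, by substituting amenability of $\Ga$ for uniformly subexponential growth of $\Rel(G)$. The Borel version uses the growth hypothesis twice: to provide a measurable partition of $V(G)$ into ``$r$-separated'' parts, and to control the convergence of the algorithm pointwise. In the measurable setting we are free to discard a $\nu$-null set, so only the second use needs to be replaced, and it will be replaced by a F{\o}lner-type averaging argument.

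Set $G := G(\ga_0,\ldots,\ga_{k-1})$. Essential freeness gives a $\Ga$-invariant conull Borel $X_0 \subset X$ on which $\ga\mapsto \ga.x$ is injective, so every orbit carries the Cayley graph of $\Ga$ with generating set $\{\ga_i\}_{i<k}$, and $\Rel(G)|_{X_0}$ is the Schreier graph of a finite $S \subset \Ga$ determined by the $\ga_i$. For any fixed $r$ the graph $\Rel(G)^r$ has degree at most $\De^r$, so the Kechris--Solecki--Todorcevic greedy coloring of bounded-degree Borel graphs produces a Borel partition $X_0 = V_0 \sqcup\cdots\sqcup V_{p-1}$ with $p \le \De^r + 1$ in which any two distinct points of the same $V_i$ are at $\Rel(G)$-distance at least $r+1$. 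No amenability is needed for this step.

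Now I run the MTAL as in Section~\ref{mtal}: assign an independent uniform $\rnd_i \in b^{\N}$ to each $V_i$, use them to define the initial assignment and the parallel resamplings $f_0, f_1, \ldots$, each jointly measurable in $x$ and in $(\rnd_i)$. The Moser--Tardos witness-tree analysis, adapted to limited randomness in Section~\ref{mtal}, bounds the probability that a given $x$ is resampled at step $t$ by a sum, indexed by rooted witness trees of size at least $t$, each contributing at most $\prod_v p(v) < (1/(e\De))^{|T|}$. In Theorem~\ref{into} the total is estimated pointwise using the $(1+\eps)^R$-ball bound from uniformly subexponential growth. In the amenable case I instead bound the $\nu$-expected measure of the ``still resampling at time $t$'' set: for each $(S^r,\eps)$-F{\o}lner $F \subset \Ga$, measure-preservation of $\rho$ converts the pointwise witness-tree sum at $x$ into an average over the finite set $F.x$, and almost-invariance of $F$ under $S^t$ (for $t$ bounded in terms of $|F|$) makes that average equal, up to an $O(\eps)$ boundary correction, to the corresponding finite Moser--Tardos estimate on $F$; the classical Moser--Tardos bound then supplies exponential decay in $t$.

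Given this exponential decay in $\nu$-expectation, a Borel--Cantelli argument shows that $\nu$-a.e.\ $x$ stabilises after finitely many steps, so $f(x) := \lim_t f_t(x)$ is well-defined and satisfies $\mathbf R$ for $\nu$-a.e.\ $x \in X_0$; redefining $f$ measurably on the null exceptional set produces the desired measurable $f\colon X \to b$ satisfying $\mathbf R$. The main obstacle, and the only genuine departure from the proof of Theorem~\ref{into}, is implementing this F{\o}lner-based estimate for the witness-tree count in place of the subexponential growth bound; once the replacement is in hand, the rest of Section~\ref{mtal} transfers essentially verbatim.
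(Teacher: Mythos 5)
The paper itself does not prove Theorem~\ref{m-into}: the authors state explicitly that the proof is deferred to future work, and only note that the special case of $\Ga$ of subexponential growth follows from Theorem~\ref{into}. So there is no in-paper proof to compare your proposal against; I will assess it on its own terms.

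Your overall template --- a Kechris--Solecki--Todorcevic-type colouring for the $r$-sparse Borel partition, a F{\o}lner window replacing the ball-growth hypothesis, and Borel--Cantelli giving a.e.~convergence --- is the natural one and is in the spirit of what the authors gesture at. The genuine gap is in the middle, and it is exactly the point where the paper expends its technical effort. You write that the witness-tree analysis ``adapted to limited randomness'' still bounds the probability of a tree $T$ by $\prod_{v\in T} p(v) \le (1/(e\De))^{|T|}$. That product bound is precisely the content of the \emph{independent-randomness} witness-tree lemma: it holds because each tree vertex consumes fresh random bits. Under the MTAL of Section~\ref{mtalik}, two vertices of $T$ lying in the same part of $\pi$ reuse the \emph{same} bits of $\rnd$, so the corresponding events are correlated and the product bound is simply false for trees that reach outside a single $r$-ball; and the trees relevant to a resampling at large time $t$ are of size at least $t$, hence eventually leave every fixed ball. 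The paper's Theorem~\ref{main} does not remedy this by restoring a per-vertex product bound. Instead it replaces witness trees by \emph{landscapes}, restricts them to a fixed finite window $F\subset B(y,n)$ chosen via condition~1 so that the $3$-interior $F_{-3}$ captures at least a $\frac{1}{1+\eps}$-fraction of the landscape's nodes, and then runs an injection argument ($\zeta$) against the landscape count of Lemma~\ref{l-fff}; the result is a bound on the number of landscape nodes inside $F$, not a per-witness-tree probability.

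Concretely, what is missing from your proposal: you still need the injection $\zeta$ and the counting of Lemma~\ref{l-fff} applied inside each F{\o}lner window, with the boundary loss controlled by the F{\o}lner ratio in place of inequality~\eqref{jaba}, rather than an appeal to ``the classical Moser--Tardos bound.'' Your F{\o}lner-averaging idea is a sensible replacement for condition~1 of Theorem~\ref{main} --- for an a.e.~statement you do not need to capture $\max g$, only to control the measure of $\{g>M\}$, and measure-preservation turns that into an average over $F.x$ --- but that averaging has to be coupled to the landscape-restriction machinery (Lemmas~\ref{restrict} and~\ref{equivalent}, faithfulness at $F_{-2}$) before the exponential decay in the landscape size can be harvested. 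As written, the sentence ``the classical Moser--Tardos bound then supplies exponential decay in $t$'' assumes an independence the algorithm deliberately discards; that is the step that needs to be rebuilt.
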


We do not provide a proof of this theorem in the present paper, but it will be included in a future work. However, if $\Ga$ is of subexponential growth (hence in particular amenable), then Theorem~\ref{m-into} clearly follows from Theorem~\ref{into}.  

In~\cite{1303.4982v2} it is shown that the standard Moser-Tardos algorithm can be applied in the setting of an infinite countable graph. As a corollary, the measurable local lemma holds for the Bernoulli action $\Ga\actson [0,1]^\Ga$, where $\Ga$ is an arbitrary (not necessarily amenable) group . In~\cite{1604.07349v2} it is shown that the meaurable local lemma holds for any action $\Ga \actson X$ for which there exists a $\Ga$-equivariant Borel surjection onto $[0,1]^\Ga$.

In particular, if $\Ga$ is an amenable group then the results of~\cite{1303.4982v2}  and~\cite{1604.07349v2} imply the measurable local lemma only for the actions $\Ga\actson X$ which have infinite entropy. This is a rather large constrain - many natural actions of amenable groups which are covered by Theorems~\ref{into} and Theorems~\ref{m-into} do not have infinte entropy. For example, as far as we know, it is impossible to deduce the measurable, let alone Borel, version of Corollary~\ref{kupa} from  ~\cite{1303.4982v2} or~\cite{1604.07349v2}

On the other hand, the results in~\cite{1303.4982v2}  and~\cite{1604.07349v2} do not require the group $\Ga$ to be amenable. It is very interesting open problem to determine whether the measurable local lemma holds for all probability measure preserving actions of all groups.

Let us make also the following definition.

\begin{definition}
Let $\Ga$ be a countable group, let $X$ be a standard Borel space, and let $\rho\colon \Ga\actson X$ be an action by Borel bijections. As before , for a sequence $\ga_0,\ldots, \ga_{k-1}$ of elements of $\Ga$ we define $G = G(\ga_0,\ldots, \ga_{k-1})$ to be the graph with $V(G) := X$ and $(x,y) \ssin E(G)$ if for some $i\ssin k$ we have  $\ga_i.x=y$. 

We say that \textit{the Borel local lemma holds for the action $\rho$} if for  all sequences $\ga_0,\ldots, \ga_{k-1}$ and all Borel local rules on $G(\ga_0,\ldots, \ga_{k-1})$ such that  $p(x) < \frac{1}{e\De}$ for all $x\in X$, there exists a Borel function $f\colon X \to b$  which satisfies $\mathbf R$. 
\end{definition}

Theorem~\ref{into} establishes in particular that the Borel local lemma holds for all Borel actions of all groups of subexponential growth. In view of Theorem~\ref{m-into} it would be interesting to know whether the Borel local lemma holds for all Borel actions of all amenable groups. In fact, as far as we know, it could be that the Borel local lemma holds for all Borel actions of all groups.

\subsection{Basic notation and definitions}\mbox{}

Given a set $X$, we denote by $\P(X)$  the power set of $X$. Given another set $Y$ we let $X^Y$ be the set of all functions from $Y$ to $X$. We do not automatically identify the power set of $X$ with $2^X$, which is the set of all functions from $X$ to $2=\{0,1\}$. The cardinality of $X$ is denoted by $|X|$. 

As already mentioned, we frequently use the fact that any natural number is equal to the set of all smaller natural numbers. This occasionally leads to the following notational clash: when $p\ssin \N$, the symbol $2^p$ denotes both the set of functions from $p$ to $2$, and the cardinality of that set. However, it should not lead to a confusion.

An oriented graph is a pair $(V,E)$ where $V$ is a set and $E$ is a subset of $V\times V$. In particular, oriented graphs are allowed to have self-loops, but each vertex can have at most one self-loop. An oriented graph $G$ is \textit{symmetric} if for any two vertices $x,y\in V(G)$ we have that $(x,y) \ssin E(G)$ if and only if $(y,x)\ssin E(G)$. The \textit{symmetrization} of an oriented graph $G$ is the smallest symmetric graph $H$ such that $V(G) = V(H)$ and $E(G)\susbet E(H)$.

For $x\in V(G)$ we let $\Var(x) := \{y\ssin V(G)\colon (x,y)\ssin E(G)\}$,  $\Cl(x) :=\{y\ssin V(G)\colon$ $(y,x)\ssin E(G)\}$, and $N(x) := \Var(x) \cup \Cl(x)$. If we need to stress the dependence on $G$ we use the notation $\Var_G(x)$, $\Cl_G(x)$, and $N_G(x)$. If $A$ is a set of vertices then we let $\Var(A)  := \bigcup_{x\in A} \Var(x)$, and similarly for $\Cl(A)$ and $N(A)$.

We define $\Rel(G)$ to be the oriented graph on the same set of vertices as $G$ and with an oriented edge from $x$ to $y$ if and only if $\Var_G(x)\cap \Var_G(y)$ is non-empty. Clearly $\Rel(G)$ is symmetric.

A \textit{variable graph} is an oriented graph $G$ together with two families $a(x)$, $x\ssin V(X)$, and $b(x)$, $x\ssin V(X)$, where $a(x)$ is a well-order on $\Var(x)$ and $b(x)$ is a well-order on $\Cl(x)$.

A \textit{relation graph} is an oriented graph with edges labelled by natural numbers, in such a way that at each vertex $x$ all out-edges starting at $x$ have different labels. 

If $G$ is a variable graph then we consider $\Rel(G)$ to have the following relation graph structure. For every vertex $x$ we label the edges starting at $x$ with distinct elements of $|N_{\Rel(G)}(x)|$, which we do by ordering the elements of $N_{\Rel(G)}(x)$ in the following way. Let $y,z\in N_{\Rel(G)}(x)$ be two distinct elements, let $v$ and $w$  be the smallest elements of, respectively, $\Var_G(x)\cap \Var_G(y)$ and $\Var_G(x)\cap \Var_G(z)$, with respect to the order on $\Var_G(x)$.  If $v$ is smaller than $w$ in $\Var_G(x)$ then we define $y<z$. If $w$ is smaller than $v$ then we define $z<y$. If $v=w$, then we define $y<z$ if and only if $y$ is smaller than $z$ in $\Cl_G(v)$.


\begin{example}
Let us explain how to obtain a variable graph from an instance of $3$-SAT in conjunctive normal form (CNF). An instance of $3$-SAT in CNF consists of a sequence $x_0,x_1,\ldots, x_{n-1}$ of pair-wise distinct variables and a sequence $c_0,c_1,\ldots,c_{m-1}$ of pair-wise distinct \textit{clauses}, i.e.~each $c_i$ is of the form $a_0 \vee a_1 \vee a_2$, where each $a_j$ is equal either to some $x_k$ or $\neg x_k$. We say that such an instance is \textit{satisfiable} if there exists a function $f\colon \{x_0,\ldots, x_{n-1}\}\to \{0,1\}$ such that all the clauses $c_i$ are satisfied.

We associate to such an instance a variable graph $G$ as follows. We let $V(G) := \{c_0,\ldots, c_{m-1}\} \sqcup \{x_0,\ldots, x_{n-1}\}$, and we let $(c_i,x_j)$ be an edge if either $x_j$ or $\neg x_j$ appears in $c_i$.  In particular the graph $G$ is bipartite and every edge is oriented from some $c_i$ towards some $x_j$. The well-orders necessary to finish the definition of a variable graph are induced from the orders $x_0<x_1<\ldots <x_{n-1}$ and $c_0<c_1<\ldots <c_{m-1}$. It is easy to check that $\Rel(G)$ is a graph on the same set of vertices, such that all edges are of the form $(c_i, c_j)$ for some clauses $c_i$ and $c_j$, and $(c_i, c_j)$ is an edge if $c_i$ and $c_j$ share a variable. In particular every vertex $c_i$ has a self-loop.
\end{example}

\section{Moser-Tardos algorithm}\label{mtal}

\subsection{Borel graphs}\mbox{}

Let $\Om$ be a standard Borel space and $\cal B$ be its Borel $\si$-algebra. A \textit{Borel arrow on $\Om$} is a pair $(U,\ga)$, where  $U\ssin \cal B$ and $\ga\colon U\to \Om $ is a Borel isomorphism onto its image. Given a countable sequence $S = ((U_1,\ga_1),(U_2,\ga_2),\ldots)$ of Borel arrows, we associate to it an oriented graph $\cal G(S)$ as follows. We let $V(\cal G(S)) := \Om$ and 
$$
E(\cal G(S)) := \{(x,y)\colon \exists i\ssin \N \text{ such that } x\ssin U_i \text{ and } \ga_i(x) = y\}.
$$  
 
A \textit{Borel graph} is a graph $\cal G$ which is equal to $\cal G(S)$ for some sequence $S$ of Borel arrows on a standard Borel space. 

The following properties are easy to check.
\blemm[obv] 
Let $\cal G$ be a Borel graph. We have the following properties.
\begin{enumerate}[(i),nosep]
\item The set $E(\cal G)$ of edges of $\cal G$ is a Borel subset of $V(\cal G)\times V(\cal G)$. 
\item If $X\subset \Om$ is a Borel set then the graph induced on $X$ from $\cal G(S)$ is also a Borel graph.
\item The symmmetrization of $\cal G$ is a Borel graph.
\item The graph $\Rel(\cal G)$ is a Borel graph.
\item \label{l5} If $A\subset V(\cal G)$ is a Borel set then also the sets $\Var(A)$, $Cl(A)$ and $N(A)$ are Borel.\qed
\end{enumerate}
\elemm

An \textit{independence function for a Borel graph $\cal G$} is a function $\I\colon \cal B \to \cal B$ with the property that for every $X\in \cal B$ the set $\I(X)$ is a maximal subset of $X$ which is independent in $\cal G$. In the appendix we show that every locally finite Borel graph admits an independence function.

Let $S = ((U_1,\ga_1),(U_2,\ga_2),\ldots)$ be a sequence of Borel arrows on a standard Borel space. The Borel graph $\cal G(S)$ becomes a variable graph when we induce the orders on $\Var(x)$ and $\Cl(x)$ from the sequence $S$ in the following way. For two distinct vertices $y,z\in \Var(x)$ we let $y<z$ if and only if there exists $i$ such that $y=\ga_i(x)$ and for all $j<i$ we have $z \neq \ga_j(x)$.  Similarly for two distinct vertices $y,z\in \Cl(x)$ we let $y<z$ if and only if there exists $i$ such that $x=\ga_i(y)$ and for all $j<i$ we have $x \neq \ga_j(z)$. A \textit{Borel variable graph} is a Borel graph with the structure of a variable graph just described.

\subsection{Moser-Tardos algorithm with limited randomness}\label{mtalik}\mbox{}

It is convenient to fix a natural number $b\,{>}\,1$ for the rest of this article.

A \textit{local rule} for a Borel variable graph $\cal G$ is a map $\mathbf R$ whose domain is $V(\cal G)$ and which assigns to each $x\in V(G)$ a subset $\mathbf R(x)$ of $b^{\Var(x)}$. 

Let $\mathbb S$ denote the set of all finite $b$-valued sequences. In other words, the set $\mathbb S$ consists of all functions $f\colon k \,{\to}\, b$, where $k\in \N$. Since for every $x\in V(\cal G)$ the set $\Var(x)$ is well-ordered, we have a canonical bijection $\iota_x\colon |\Var(x)| \to \Var(x)$. Thus a local rule can be canonically identified with a  function $\mathbf R'\colon V(\cal G) \to \cal P(\mathbb S)$, as follows: we let $\mathbf R'(x)$  be the set $\{f\circ\iota_x\colon f\ssin \mathbf R(x)\}$.

We say that $\mathbf R$ is \textit{Borel} if the preimage of every element of $\cal P(\mathbb S)$ under $\mathbf R'$ is Borel.\footnote{This definition makes sense for all Borel variable graphs $\cal G$. However, it is a reasonable definition only when $\cal G$ is locally finite, in which case the image of $\mathbf R'$ is countable.}  The only purpose of defining the function $\mathbf R'$ was to define what it means for $\mathbf R$ to be Borel, and in fact $\mathbf R'$ will not be used again.

Given a function $f\colon V(\cal G) \to b$ and $x\in V(\cal G)$ we let $\wb f(x)\colon \Var(x)\to b$ be the restriction of $f$ to $\Var(x)$. We say that $f$ \textit{satisfies} $\mathbf R$ if for every $x$ we have  $\wb f(x)\in \mathbf R(x)$.

Given $\mathbf R$, the \textit{complementary rule} $\mathbf R^c$ is defined by setting $\mathbf R^c(x) := b^{\Var(x)} \setminus \mathbf R(x)$. For $f\colon V(\cal G)\to b$ we set $\B_{\mathbf R}(f):= \{x\ssin V(\cal G)\colon \wb f(x) \ssin \mathbf R^c(x)\}$.  When $\mathbf R$ is clear from the context we write $\B$ instead of $\B_{\mathbf R}$.

The proof of the following lemma is a tedious but routine calculus exercise, and so we state it without a proof.

\blemm[krem]
Let $\cal G$ be a locally finite Borel variable graph. If $f\colon V(\cal G)\to b$ is a Borel function and $\mathbf R$ is a  Borel local rule for $\cal G$ then $\B_{\mathbf R}(f)$ is a Borel subset of $V(\cal G)$.\qed
\elemm

To avoid cluttering we will write $\I\B(f)$ instead of $\I(\B(f))$ throughout the article.

A \textit{Borel partition} of $V(\cal G)$ is a tuple $\pi = (W_0,\ldots,W_{p-1})$ such that for all $i\in p$ we have that $W_i$ is a Borel subset of $V(\cal G)$, the sets $W_i$ are pair-wise disjoint, and $\bigcup_{i\in p} W_i = V(\cal G)$. Given a Borel partition $\pi = (W_0,\ldots, W_{p-1})$ and $x\in V(\cal G)$ we define $\pi(x)$ to be the unique $i\ssin p$ such that $x\in W_i$.

A \textit{Moser-Tardos tuple} is a tuple $(\cal G, \mathbf R, \I, \pi)$, where $\cal G$ is a Borel variable graph, $\mathbf R$ is a Borel local rule for $\cal G$, $\I$ is an independence function for $\Rel(\cal G)$, and $\pi= (W_0, W_1,\ldots, W_{p-1})$ is a Borel partition of $V(\cal G)$.

We are ready to present a version of the Moser-Tardos algorithm.  Let $(\cal G, \mathbf R, \I, \pi)$ be a Moser-Tardos tuple, let $k \ssin \N$ and  $\rnd\ssin b^{p\times k}$.  The \textit{$k$-step Moser-Tardos algorithm for} $(\cal G, \mathbf R, \I, \pi)$ and $\rnd$ takes as its input a Borel function $f\colon V(\cal G) \to b$ and returns a sequence of Borel functions $\MT_0 := f$, $\MT_1,\ldots, \MT_k \colon V(\cal G)\to b$. Furthermore if $k'>k$  and $\rnd'\in b^{p\times k'}$ extends $\rnd$ in the obvious sense, then the $k'$-step Moser-Tardos algorithm extends the $k$-step Moser-Tardos algorithm in the obvious sense. Thus we will also speak of the \textit{$\N$-step Moser-Tardos algorithm} which outputs an infinite sequence of functions $\MT_0,\MT_1,\MT_2,\ldots$.

It is convenient to fix  $(\cal G, \mathbf R, \I, \pi)$ and $f$, and regard the  Moser-Tardos algorithm as only depending on $\rnd$.  As such, we do not incorporate  $(\cal G, \mathbf R, \I$, $\pi)$ and $f$ in the notation, and we include $\rnd$ in the notation only when it is necessary to stress the dependence on $\rnd$.  The function $\rnd$ should be thought of as the source of randomness for the Moser-Tardos algorithm (see also Remark~\ref{rapa} below). The tuple  $(\cal G, \mathbf R, \I, \pi, f)$ will be called a \textit{long Moser-Tardos tuple}.

We let  $\MT_0 := f$ and we proceed to define $\MT_1 = \MT_1(\rnd),\ldots, \MT_k = \MT_{k}(\rnd)$ inductively.

Let $h_0\colon V(\cal G) \to \N$ be identically equal to $0$. Now suppose that for some $j < k$ the functions $\MT_j$ and $h_j$ are defined. We let
$$
h_{j+1}(x) = 	\left\{\begin{array}{l l}
  		h_j(x) +1& \quad \mbox{if $x\in \Var(\I\B(\MT_j))$,}\\
		h_j(x) & \quad \mbox{otherwise, }\\ 
	\end{array} \right.
$$
and
$$
\MT_{j+1}(x) = 	\left\{\begin{array}{l l}
  		\rnd(\pi(x) ,h_j(x))& \quad \mbox{if $x\in \Var(\I\B(\MT_j))$,}\\
		\MT_{j}(x)& \quad \mbox{otherwise.}\\ 
	\end{array} \right. 
$$

Using Lemmas~\ref{krem} and~\ref{obv}\ref{l5}, it is easy to prove inductively that the functions $\MT_0,\ldots, \MT_k$ are Borel.

If for some $x\ssin V(\cal G)$ we have $h_k(x)\,{>}\, N$ then we say that the $k$-step Moser-Tardos algorithm \textit{resamples $x$ more than $N$ times}. Similarly if for some $k\in \N_+$ and $x\in V(\cal G)$ we have $h_k(x) >N$ then we say that the $\N$-step Moser-Tardos algorithm \textit{resamples $x$ more than $N$ times}.

If $N\ssin \N$ is such that for all $x\ssin V(\cal G)$ we have  $h_k(x) \,\le\, N$ then we say that the $k$-step Moser-Tardos algorithm \textit{resamples each point at most $N$ times.} Similarly when $N\in \N$ is such that for all $k\in \N$ and all $x\in V(\cal G)$ we have $h_{k}(x)\le N$ then we say that the $\N$-step Moser-Tardos algorithm \textit{resamples each point at most $N$ times}.

\brema[rapa] Throughout the whole article, whenever we speak of the probability of an event, the implied  probability space is one of the spaces $b^{p\times k}$, $k\in \N$, or $b^{p\times \N}$, together with the product measure of the uniform measures on the copies of $b$. Thus for example the statement ``With probability $\eta$ the  $\N$-step Moser-Tardos algorithm resamples each point at most $N$ times'' should be read as: ``Let $(\cal G, \mathbf R, \I,\pi)$ and $f$ be fixed. The measure of the set of those $\rnd\in b^{p\times \N}$ such that the  $\N$-step Moser-Tardos algorithm for $(\cal G, \mathbf R, \I, \pi, \rnd)$ with input $f$ resamples each point at most $N$ times is equal to $\eta$''.
\erema

\newcommand{\used}{\textsc{Used}}
\newcommand{\unused}{\textsc{Unused}}

Finally we define two functions, $\used$  and $\unused$, which will be useful in the upcoming analysis of the $k$-step Moser-Tardos algorithm. Both $\used$  and $\unused$ assign a $b$-valued  sequence to each vertex $x\in V(\cal G)$. We define $\used(x)$ to be the sequence 
$$
\rnd(\pi(x),0),\ldots, \rnd(\pi(x),h_{k-1}(x)-1),
$$
and $\unused(x)$ to be the sequence 
$$
\rnd(\pi(x), h_{k-1}(x)),\ldots, \rnd(\pi(x),k-1).
$$

When we need to stress the dependence of $h_k(x)$, $\used(x)$ and $\unused(x)$ on $\rnd$, we write $h_k(\rnd,x)$, $\used(\rnd, x)$ and $\unused(\rnd,x)$, respectively.

Let us set $X(\cal G, \mathbf R) := \{x\in V(\cal G)\colon \mathbf R(x) \neq b^{\Var_{\cal G}(x)}\}$. We will need to deal with the set $X(\cal G, \mathbf R)$  only to reduce the general case of the local lemma to the case when for all $x\in V(\cal G)$ the sets $|Var(x)|$ have the same number of elements.

We finish with the following simple observation.

\blemm[koma]
Let $(\cal G, \mathbf R, \I, \pi,f)$ and $(\wb{\cal G}, \wb{\mathbf R}, \wb{\I}, \wb{\pi},\wb{f})$ be two long Moser-Tardos tuples, such that 
\begin{enumerate}[(i),nosep]
\item $X(\wb{\cal G},\wb{\mathbf R})\subset V(\cal G)$, and $\cal G$ is an induced subgraph of $\wb{\cal G}$, 
\item for all $x\in V(\cal G)$ we have  $\pi(x) = \wb{\pi}(x)$ and $f(x) = \wb{f}(x)$, 
\item for all $x\in V(\cal G)$ and all $g\ssin b^{\Var_{\cal G}(x)}$ we have that $g\ssin \mathbf R(x)$ if and only if $g$ is a restriction of an element of  $\wb{\mathbf R}(x)$,
\item for all Borel subsets $U\subset V(\cal G)$ we have $\I(U) = \wb{\I}(U)$, and
\item for $x\in V(\wb{\cal G})\setminus V(\cal G)$ we have $|\Cl_{\wb{\cal G}}(x)\cap V(\cal G)|\le 1$. 
\end{enumerate}
Let $k,N\in \N$. The probability that the $k$-step Moser-Tardos algorithm for $(\cal G, \mathbf R, \I, \pi, f)$ resamples each point at most $N$ times is equal to the probability that the $k$-step Moser-Tardos algorithm for $(\wb{\cal G}, \wb{\mathbf R}, \wb{\I}, \wb{\pi}, \wb{f})$ resamples each point at most $N$ times. 
\elemm

\bpf

Let $p$ and $\wb{p}$ denote respectively the numbers of parts of the partitions $\pi$ and $\wb{\pi}$. Let  $\rnd \in 2^{p\times k}$ and let $\wb{\rnd}\in 2^{\wb{p}\times k}$ be an extension of $\rnd$, i.e.~for $i\in p$ and $j\in k$ we have $\rnd(i,j) = \wb{\rnd}(i,j)$.

Let $\MT_0,\ldots, \MT_k$ and $\wb{\MT}_0,\ldots, \wb{\MT}_k$ be the sequence of functions produced by the respective Moser-Tardos algorithms. It is clear that for all $i\in k$ and $x\in V(\cal G)$ we have $\MT_i(x) =\wb{\MT}_i(x)$. It is also clear that ${\B}_{\wb{\mathbf R}}(\wb{\MT}_i)$ is contained in $V(\cal G)$. Thus, we have $\wb{\I}\B_{\wb{\mathbf R}}(\wb{\MT}_i) = \I\B_{\mathbf R}(\MT_i)$. 

In particular any point $x\in V(\cal G)$ is resampled exactly the same amount of times by the Moser Tardos agorithms for $(\cal G, \mathbf R, \I, \pi,f)$ and for $(\wb{\cal G}, \wb{\mathbf R}, \wb{\I}, \wb{\pi},\wb{f})$.

For $x \in V(\wb{\cal G}) \setminus V(\cal G)$ we consider two cases. If $\Cl_{\wb{\cal G}}(x)\cap V(\cal G)$ is empty then the Moser-Tardos algorithm on $(\wb{\cal G}, \wb{\mathbf R}, \wb{\I}, \wb{\pi},\wb{f})$ does not resample $x$ at all. If $\Cl_{\wb{\cal G}}(x)\cap V(\cal G)$ is not empty then it contains a unique point $y\in V(\cal G)$. If there is $z\in \Var_{\cal G}(y)$, then clearly $x$ is resampled exactly as many times as $z$. Finally if $\Var_{\cal G}(y)$ is empty then clearly for any $F\colon V(\wb{\cal G}) \to b$  we have $y\notin \B(F)$, and hence again the Moser-Tardos algorithm does not resample $x$ at all. This finishes the proof.
\epf

\section{Preliminaries on landscapes of trees}

\subsection{Landscapes}\mbox{}
\newcommand{\Canvas}{\operatorname{Canvas}}
\newcommand{\Trees}{\operatorname{T}}
\newcommand{\For}{\operatorname{For}}


For $x \in V(G)\times \N$  we let $\wb x$ be the first coordinate of $x$ and we let the \textit{level of $x$}, denoted by  $\ell(x)$, be the second coordinate of $x$.  For $i\in \N$ and $V\subset V(G)\times \N$ we let $V_i$ be the subset of those $x\in V$ for which $\ell(x) = i$.

Given a variable graph $G$ we define a relation graph $\Canvas(G)$ as follows. The set of vertices of $\Canvas(G)$ is $V(G)\times \N$. For every edge $(x,y)$ of the graph $\Rel (G)$  and every $i\in \N$ we add an edge $((x,i), (y,i+1))$ to $\Canvas(G)$, with the same label as the label of $(x,y)$.

A \textit{pseudo-landscape $\cal L$}  is a triple consisting of the following elements.
\begin{enumerate}[(i),nosep]
\item a variable graph $G_{\cal L}$, 
\item a local rule $\mathbf R_{\cal L}$ on $G_{\cal L}$, 
\item a subgraph $\For(\cal L)$  of $\Canvas(G_{\cal L})$ such that at each vertex the in-degree is either $0$ or $1$. 
\end{enumerate}

Note that the graph in (iii) is in fact a forest and it will be referred to as \textit{the forest of $\cal L$}. We let $V(\cal L)$ and $E(\cal L)$ denote respectively the sets of vertices and edges of $\For(\cal L)$, and we let $\Trees(\cal L)$ be the set of connected components of $\For(\cal L)$. For $\tau \ssin \Trees(\cal L)$ we let $\rho(\tau) \ssin V(\tau)$ be the root, i.e.~the unique vertex with minimal level. Finally, we let $\ell(\tau)$  be the \textit{level of $\tau$}, which is by definition equal to $\ell(\rho(\tau))$.

We let $\Canvas(G_{\cal L},k)$ to be the subgraph of $\Canvas(G_{\cal L})$ induced on the set $V(G_{\cal L})\sstimes k$ of vertices. The minimal $k$ such that the forest in (iii) is a subgraph of $\Canvas(G_{\cal L}, k)$ is called the \textit{height of $\cal L$}.

A pseudo-landscape is a \textit{landscape} if for all $i\ssin \N$ and all distinct $x,y\in V(\cal L)_i$ we have that the distance between $\wb x$ and $\wb y$ in $\Rel(G_{\cal L})$ is at least $2$. 

Let $D,\De, \be\in \N$. We say that $\cal L$ is of of type $(D,\De,\be)$ if the maximal out-degree in $G_{\cal L}$ is at most $D$, the maximal degree in $\Rel(G_{\cal L})$ is at most $\De$, and for every $x\in V(G_\cal L)$ we have $ |\mathbf R_{\cal L}^c(x)| \le \be$.

We finish by defining a \textit{decoration} of a landscape $\cal L$ as consisting  of the following data.
\newcommand{\prev}{\textsc{Prev}}

\begin{enumerate}[(1),nosep]
\item a function $\textsc{Final}(\cal L) \in  b^{V(G_{\cal L})}$
\item for each $x \ssin V(\cal L)$ an element $\prev(x)$ of $\mathbf R_{\cal L}^c(\wb x)$. In particular  $\prev(x)$ is a function $\Var(\wb x) \to b$.
\item a function $\pi \colon V(G_{\cal L}) \to \N$.
\end{enumerate}

Let $D,\De, \be,$ $N_1,N_2,p\in \N$. We say that a decorated landscape  $\cal L$ is of type $(D,\De,\be,$ $N_1,N_2,p)$ if it is of type $(D,\De,\be)$ and additionally $|V(G_{\cal L})| \le N_1$, $|V(\cal L)| = N_2$, and $\im(\pi) \subset p$.


\subsection{Grounded landscapes}\mbox{}
\newcommand{\G}{\operatorname{G}}
\newcommand{\Push}{\operatorname{Push}}

Let $\cal L$ be a landscape. We say that $\cal L$ is \textit{grounded} if all trees of $\cal L$ are at level $0$. The purpose of this subsection is to define an equivalence relation on landscapes and to show that every landscape is equivalent to a grounded one. We start with several definitions.

We say that $\cal L$ is \textit{tight} if at least one tree is at level $0$ and for every tree $\tau\in \Trees(\cal L)$ such that $\ell(\tau)>0$ there exist $\si\in \Trees(\cal L)$ different than $\tau$, $x\in V(\si)$ and $y\in V(\tau)$ such that $(x,y)$ is an edge of $\Canvas(G_{\cal L})$. Note that in particular any grounded landscape is tight.

For a vertex $x$ in $\Canvas(G_{\cal L})$ such that $\ell(x)>0$ we let $\Push(x) := (\wb x, \ell(x)-1)$. 
If $G$ is a subgraph of $\Canvas(G_{\cal L})$ such that for all $x\in V(G)$ we have $\ell(x)>0$ then we let $\Push(G)$ be the graph whose set of vertices is $\{\Push(x)\colon x\in V(G)\}$ and whose set of edges is $\{(\Push(x),\Push(y))\colon (x,y)\in E(G)\}$.

If for all $x\in V(\cal L)$ we have $\ell(x)>0$ then we say that $\cal L$ is \textit{pushable} and we define $\Push(\cal L)$ to be the triple consisting of   the graph $G_{\cal L}$, the local rule $\mathbf R_\cal L$, and the graph $\Push(\For(\cal L))$.

For $\tau\in \Trees(\cal L)$ such that $\ell(\tau)>0$ we let $\Push(\cal L,\tau)$ be the triple consisting of the graph $G_{\cal L}$, the local rule $\mathbf R_\cal L$ and the subgraph of $\Canvas(G_{\cal L})$ which is the union of the graph $\Push(\tau)$ and of all the graphs in $\Trees(\cal L)\setminus \{\tau\}$.

We say that $\tau \in \Trees(\cal L)$ is \textit{pushable} if $\tau$ is a tree which ``witnesses that $\cal L$ is not tight'', i.e.~such that  $\ell(\tau)>0$ and for all $\si\in \cal L$ different than $\tau$, and all $x\in V(\si)$, $y\in V(\tau)$ we have that $(x,y)$ is not an edge in $\Canvas(G_{\cal L})$.

The landscapes $\Push(\cal L)$ for a pushable $\cal L$  and $\Push(\cal L,\tau )$, for a pushable $\tau$ will be referred to as \textit{simple pushes}. The following is straightforward to check.

\blemm
A simple push of a landscape is a landscape.\qed
\elemm

Let $(x,y,z)$ be a triple of distinct elements of $V(\cal L)$. We say $(x,y,z)$ is \textit{rebranchable} if $(x,z)\in E(\cal L)$ and $(y,z)$ is an edge of $\Canvas(G_{\cal L})$.

A \textit{rebranching} of $\cal L$ with respect to a rebranchable triple $(x,y,z)$ is a triple consisting of $G_{\cal L}$, $\mathbf R_{\cal L}$ and the subgraph $H$ of $\Canvas(G_\cal L)$ defined as follows. We let $V(H) := V(\cal L)$ and $E(H) = E(\cal L) \setminus\{(x,z)\} \cup \{(y,z)\}$. A tree $\tau \in \Trees(\cal L)$ is \textit{rebranchable} if there exists a rebranchable triple $(x,y,z)$ such that $(x,z)\in E(\tau)$.

We say that a pair $(y,z)$ of distinct elements of $V(\cal L)$ is \textit{joinable} if $z$ is a root of a tree in $\Trees(\cal L)$ and $(y,z)$ is an edge in $\Canvas(G_{\cal L})$. A tree $\tau \in \Trees(\cal L)$ is \textit{joinable} if for some $y\in V(\cal L)$ the pair (y, $\rho(\tau))$ is joinable.

A \textit{joining} of $\cal L$ with respect to a joinable pair $(y,z)$ is a triple which arises from $\cal L$ by adding the edge $(y,z)$ to $\For(\cal L)$.

The following lemma is straightforward to check.

\blemm[operations]
Let $\cal L$ be a landscape.
\begin{enumerate}[(i),nosep]
\item If $(x,y,z)$ is rebranchable triple then the rebranching of $\cal L$ with respect to $(x,y,z)$ is a landscape. 
\item If $(y,z)$ is a joinable pair then the joining of $\cal L$ with respect to $(y,z)$ is a landscape.
\item $\cal L$ is not tight if and only if $\cal L$ is pushable or there exists $\tau\in \Trees(\cal L)$ which is pushable.
\item A tree $\tau \in \Trees(\cal L)$ is pushable if and only if $\ell(\tau)>0$, it is not rebranchable and it is not joinable.\qed
\end{enumerate}
\elemm

We say that two landscapes are \textit{equivalent} if one arises from the other by a sequence consisting of simple pushes, rebranchings and joinings. 

\blemm
Every landscape is equivalent to a grounded landscape.
\elemm

\bpf
For a landscape $\cal L$ we let $\G(\cal L)\subset T(\cal L)$ denote the set of trees at level zero. We prove the lemma by showing that for every landscape $\cal L$ such that $|T(\cal L)| - |\G(\cal L)| >0$  there exists a sequence of simple pushes, rebranchings, and joinings which produces a landscape $\cal K$ for which  $|T(\cal K)| - |\G(\cal K)| < |T(\cal L)| - |\G(\cal L)|$.

Since a simple push does not  change $|T(\cal L)|$ and does not decrease $|G(\cal L)|$, we may assume that $\cal L$ is tight. Suppose that $\tau\in T(\cal L)$ is such that $\ell(\tau)>0$. 

Since rebranching does not change $|T(\cal L)|$ nor $|\G(\cal L)|$ we may assume that $\tau$ is not rebranchable. Thus, by Lemma~\ref{operations}, we may assume that $\tau$ is joinable. However, performing a joining decreases $|T(\cal L)|$ by $1$ and leaves $|G(\cal L)|$ unchanged which finishes the proof.
\epf

We finish by noting that if $\cal L$ is a decorated landscape, then simple pushes, rebranchings and joinings of $\cal L$  inherit the decoration from $\cal L$ in the obvious way.

\subsection{Counting grounded landscapes}\mbox{}

There is an obvious notion of the \textit{isomorphism of  landscapes} $\cal K$ and $\cal L$: it is  a bijection $ V(G_{\cal K}) \to V(G_{\cal L})$ which induces an isomorphism of the graphs $G_{\cal K}$ and $G_{\cal L}$ compatible with the orders on $\Var(x)$ and $\Cl(x)$ and the local rules, and which induces an isomorphism of $\For(\cal K)$ and $\For(\cal L)$. For decorated landscapes we additionally demand that the induced isomorphisms should be compatible with the decorations in the obvious sense.

In this subsection we will count the iso-classes (i.e.~isomorphism classes) of grounded decorated landscapes of a given type. We start by counting subtrees of $\Canvas(G_{\cal L})$. A \textit{$\De$-labelled tree} is an oriented tree such that at each vertex the in-degree is either $0$ or $1$, and such that the edges are labelled by the elements of $\De$ in such a way that at each vertex all the out-going edges have different labels.

\blemm[ghg] 
Let $\De\ge 2$. The number of $\De$-labeled trees with $N$ vertices is bounded by 
$$
\left(\frac{\De^\De}{(\De-1)^{\De-1}}\right)^N.
$$
\elemm

\bpf This proof is based on the arguments from \cite{spencer-note}. Let $P_0 := 0$ and for a natural number $i>0$ let $P_i$ be the number of iso-classes of $\De$-labelled trees with $i$ vertices. Let 
$$
P(X) := \sum_{i=0}^\infty P_i\sscdot X^i,
$$
and let $\rho$ be the radius of convergence of $P(X)$. Clearly it is enough to show that $P(\rho)<1$ and
$$
\rho \ge \frac{(\De-1)^{\De-1}}{\De^\De}.
$$
In fact we will show $P(\rho)\le \frac{1}{\De-1}$

Note that in a finite $\De$-labelled tree there is a unique vertex with in-degree $0$, and we call this vertex the \textit{root}. By considering all the possibilities for the outgoing edges at the root, we arrive at the following equation:
\beq[equa]
P(X) = X(1+ P(X))^\De.
\eeq
We note that there is only one formal power series $P(X)$ with $P(0)=0$ which fulfils the above equation. 

For $i,j\in \N$ let us define the numbers $Q(i,j)\in \N$ as follows. For all $i\ssin \N$ we let $Q(i,0):=0$, and for all $j>0$ we let $Q(i,j)$ to be the number of $\De$-labelled trees with $j$ vertices and such that all vertices are at the distance at most $i$ from the root. Let 
$$
    Q_i(X) =\sum_{j=0}^\infty Q(i,j)X^j.
$$
The following claim is clear from the definitions.
\begin{claim} 
The polynomials $Q_i$ have non-negative real coefficients, and for every $n$ there exists $N$ such that for all $M>N$ we have that the first $n$ coefficients of $Q_M(X)$ are equal to the first $n$ coefficients of $P(X)$.\qed
\end{claim}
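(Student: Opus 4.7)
The plan is to argue directly from the combinatorial meaning of the quantities involved. Non-negativity of the coefficients of each $Q_i(X)$ is immediate: by construction $Q(i,j)$ is the cardinality of a finite set of $\Delta$-labelled trees, hence a non-negative integer, so every coefficient of $Q_i(X)$ is a non-negative real number. This disposes of the first half of the claim.

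For the statement about initial coefficients, the key observation is elementary: any rooted tree with $j$ vertices has depth at most $j-1$, since a root-to-vertex path consists of pairwise distinct vertices and therefore contains at most $j-1$ edges. Consequently, whenever $M \ge j - 1$, the restriction ``every vertex is at distance at most $M$ from the root'' imposes no condition on $\Delta$-labelled trees with $j$ vertices, and hence $Q(M, j) = P_j$ in that range.

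Given $n$, I would therefore set $N := n$. For any $M > N$ and any $j \in \{0, 1, \ldots, n-1\}$ we have $M > n > j \ge j - 1$, so by the observation above the coefficient of $X^j$ in $Q_M(X)$ equals the coefficient of $X^j$ in $P(X)$. This is exactly the asserted agreement of the first $n$ coefficients.

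There is no genuine obstacle here; the content of the claim is simply that once $M$ is large relative to $j$, the depth-restriction defining $Q(M,j)$ becomes vacuous for $j$-vertex trees. The role of the claim in the enclosing argument is presumably to let one transfer the functional identity~\eqref{equa} from the purely formal series $P(X)$ to the honest polynomial approximations $Q_M(X)$, to which analytic tools (in particular radius-of-convergence arguments) can be applied.
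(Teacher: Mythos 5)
Your proof is correct, and it supplies precisely the elementary observation the paper has in mind when it declares the claim ``clear from the definitions'' and gives it a bare $\square$: each $Q(i,j)$ is a cardinality (hence non-negative), and a tree on $j$ vertices has depth at most $j-1$, so the depth restriction defining $Q(M,j)$ is vacuous once $M\ge j-1$, giving $Q(M,j)=P_j$. Your choice $N:=n$ then settles the agreement of the first $n$ coefficients, so there is nothing to add.
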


As a direct corollary we obtain the following claim.

\begin{claim} If for some $x\in \R$ the sequence $Q_0(x),Q_1(x),\ldots$ converges to $y\in \R$, then the power series $P(X)$ converges at $x$ and we have $P(x) = y$.
\end{claim}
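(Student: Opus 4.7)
The plan is to prove the claim via monotone convergence, leveraging the three facts about the coefficients $Q(i,j)$ made transparent by their combinatorial interpretation. Namely: (a) $Q(i,j)\ge 0$, since $Q(i,j)$ counts trees; (b) $Q(i,j)\le Q(i+1,j)$, since every $\De$-labelled tree with all vertices at distance at most $i$ from the root also has all vertices at distance at most $i+1$ from the root; and (c) $\lim_{i\to\infty} Q(i,j) = P_j$. In fact the sequence is eventually constant, because a $\De$-labelled tree on $j$ vertices has at most $j-1$ levels, so $Q(i,j) = P_j$ for all $i \ge j-1$. These three facts will replace the analytic content of the argument.

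It will be enough to treat the case $x \ge 0$, which is the relevant case for the subsequent estimate on $\rho$; I would mention this at the outset and proceed with $x \ge 0$ fixed. The first half of the proof would show $P(x) \le y$: for $x\ge 0$ the partial polynomials $Q_i(x)$ have non-negative terms, so by (b) the sequence $i\mapsto Q_i(x)$ is non-decreasing, hence $Q_i(x)\le y$ for every $i$. Fixing $N$ and letting $i\to\infty$ in the finite sum $\sum_{j=0}^N Q(i,j)\, x^j \le Q_i(x) \le y$, and using (c) to pass to the limit termwise, yields $\sum_{j=0}^N P_j x^j \le y$. Since the partial sums of $P(x)$ are non-negative, non-decreasing in $N$ and bounded by $y$, the series $P(x)$ converges and $P(x)\le y$.

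For the reverse inequality, I would note that for each fixed $i$ the polynomial $Q_i(x)$ is just the truncation of $P(x)$ in which $P_j$ is replaced by $Q(i,j)\le P_j$; thus, since $x\ge 0$, we have $Q_i(x)\le P(x)$. Sending $i\to\infty$ gives $y\le P(x)$, and combining both inequalities yields $P(x) = y$, as desired.

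There is no real obstacle here beyond bookkeeping: the substance is the combinatorial observation that $Q(i,j)$ stabilises at $P_j$ for large $i$, after which the claim is a direct application of monotone convergence. The only mildly delicate point is that one should not simply quote dominated convergence (we do not yet know $P(x)$ is finite), but rather use the monotonicity of the truncations to simultaneously produce the upper bound on $P(x)$ and verify convergence of the series.
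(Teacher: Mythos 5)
Your proof is correct and follows essentially the same route the paper has in mind: it relies on the non-negativity and the eventual coefficient agreement of the $Q_i$ with $P$ (the paper's preceding claim), made precise through your observation that $Q(i,j)$ is non-decreasing in $i$ and stabilises at $P_j$ once $i\ge j-1$, so a monotone-convergence argument closes the gap. The paper merely declares this a ``direct corollary'' without writing out the bookkeeping, and in fact invokes the monotonicity of $Q_0(x)\le Q_1(x)\le\cdots$ for $x\ge 0$ only a few lines later; you have simply made the same reasoning explicit. Your remark restricting to $x\ge 0$ is fair --- the claim is only ever applied at $x=\tfrac{(\De-1)^{\De-1}}{\De^\De}>0$, and it is only for non-negative $x$ that the termwise comparison $Q(i,j)x^j\le P_j x^j$ and the monotone-limit argument go through.
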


We proceed to use the above claim to establish the convergence of $P(X)$ for $X \le \frac{(\De-1)^{\De-1}}{\De^\De}$. 
It is clear that for any $i$ the sequence $Q(i,0), Q(i,1),\ldots$ is non-decreasing. Hence we see that for any $x\ge 0$  we have $Q_0(x) \le Q_1(x) \le Q_2(x)\le \ldots$. Also, it is easy to see that each polynomial $Q_i(X)$ is non-decreasing for $X\ge 0$. It follows that it is enough to show that the sequence $Q_0( \frac{(\De-1)^{\De-1}}{\De^\De}),Q_1( \frac{(\De-1)^{\De-1}}{\De^\De}),\ldots$ is bounded from above by $\frac{1}{\De-1}$. 

This is clear for $Q_0(\frac{(\De-1)^{\De-1}}{\De^\De})$, since $Q_0(X) = X$. Let us prove by induction that for all $i \in \N$ we have $Q_i(\frac{(\De-1)^{\De-1}}{\De^\De}) \le \frac{1}{\De-1}$. 

Just like in the case of $P(X)$, by considering all the possibilities for the outgoing edges at the root, we see that for all $i\in \N$ we have
$$
    Q_{i+1}(X) =X(1+Q_i(X))^\De.
$$
Thus, using the inductive assumption, we have 
$$
Q_{i+1}\left(\frac{(\De-1)^{\De-1}}{\De^\De}\right)  \le \left(\frac{(\De-1)^{\De-1}}{\De^\De}\right) \left(1+\frac{1}{\De-1}\right)^\De = \frac{1}{\De-1},
$$
which finishes the proof.

\epf

\blemm[l-fff]
Let $\D,\De,\be,N_1,N_2,p\in \N$ and $\De\ge 2$. The number of iso-classes of grounded decorated landscapes $\cal L$ of type $(D,\De,$ $\be,$ $N_1,N_2,p)$ is at most 
\beq[p]
C\cdot N_2^{N_1}\cdot 
\left(\frac{\De^\De}{(\De-1)^{\De-1}}
\sscdot \be\right)^{N_2},
\eeq
where $C$ depends only on $(D, \De,N_1,p)$.
\elemm

\bpf
In fact we will show that the number in question is at most
\beq[q]
N_1\sscdot(N_1+1)^{D\sscdot N_1}\cdot (D!)^{N_1}\cdot (\De!)^{N_1} \cdot 2^{b^D\sscdot N_1}\cdot b^{N_1}\cdot p^{N_1} \cdot N_1^{N_1}  
\cdot N_2^{N_1}
\cdot \left(\frac{\De^\De}{(\De-1)^{\De-1}}\sscdot \be\right)^{N_2}.
\eeq

Let us explain all the terms in \eqref{q}.  To define a pseudo-landscape of type $(D,\De,\be,$  $N_1, N_2,p)$ we need to define the graph $G_{\cal L}$ of max out-degree at most $D$ on at most $N_1$ vertices. There are at most $N_1\sscdot (N_1+1)^{D\sscdot N_1}$ iso-classes of such graphs, because we can define a graph with vertex set $N_1$ and for each element of $N_1$ we give a sequence of $D$ elements, each of which is either an element of $N_1$ or a special symbol signifying that we put no edge (this special symbol is the reason we have $N_1+1$ and not simply $N_1$). We need the additional factor $N_1$ to specify which of the vertex sets $1,2,\ldots, N_1$ is the vertex set of $G_{\cal L}$.

The variable graph structure, i.e.~the choice of orderings of all the sets $\Var(x)$ and $\Cl(x)$, $x\in V(G_{\cal L})$, contributes the factor $(D!)^{N_1}\cdot (\De!)^{N_1}$.

We need a local rule $\mathbf R_{\cal L}$ on $G_\cal L$. If we fix the graph $G_{\cal L}$ with vertex set $N_1$ then there are at most $2^{b^D\sscdot N_1}$ different local rules, because for each vertex $x\in N_1$ we need to specify a family of function in $b^{\Var(x)}$, and there are at most $2^{b^D}$ such families. 

We need to specify the forest of $\cal L$, and we will do it by specifying a sequence of $\De$-labelled trees $T(0),T(1),\ldots,T(N_1-1)$, such that $\sum_{i=0}^{N_1} |V(T(i))| = N_2$. We allow some o the trees $T(i)$ to have empty vertex sets. By basic enumerative combinatorics and Lemma~\ref{ghg} there are at most  
$$
N_2^{N_1}
\cdot \left(\frac{\De^\De}{(\De-1)^{\De-1}}\right)^{N_2}
$$
such sequences.

Since we are counting decorated landscapes, the choice of $\prev(x)$ for each $x\in V(\cal L)$ contributes the factor $\be^{N_2}$.

Finally we need to specify the  missing elements of the decoration, i.e.~an element $\textsc{Final}(\cal L)\in b^{V(G_{\cal L})}$ and $\pi \in p^{V(G_{\cal L})}$ which contribute the factors  $b^{N_1}$ and $p^{N_1}$ respectively.
\epf

\subsection{The $b$-valued sequences encoded by a decorated landscape}\mbox{}
\newcommand{\Seq}{\textsc{Seq}}
\newcommand{\Ass}{\textsc{Asgn}}
\newcommand{\Final}{\textsc{Final}}

Let $\cal L$ be a decorated landscape of height $k$. Let us inductively define \textit{ assignment functions} $\Ass_{k},\Ass_{k-1},\ldots, \Ass_0\colon V(G_{\cal L})\to b$ as follows. First we let $\Ass_k = \Final (\cal L)$. Now suppose that $\Ass_i$ is defined for some $i\ssin \{1,\ldots, k\}$ and let us define $\Ass_{i-1}$. 

Let $y\ssin V(G_{\cal L})$. If there exists no $x \ssin V(\cal L)_{i-1}$ such that $\wb y\ssin \Var(\wb x)$ then we let $\Ass_{i-1}(y) := \Ass_{i}(y)$. Otherwise, since $\cal L$ is a landscape, there is a unique such $x$, and we let $\Ass_{i-1}(y) := \prev(x)(\wb y)$.

Now for each $x\in V(G_{\cal L})$ we define a $b$-valued sequence $\Seq(x)$ as follows. First let $\Seq_0(x)$ be the empty sequence and suppose that $\Seq_i(x)$ is defined for some $i<k$. If for some $y\in V(\cal L)_i$ we have $\wb x\ssin \Var(\wb y)$  then we define $\Seq_{i+1} (x)$ to be the  concatenation of  $\Seq_i(x)$ and the one element sequence consisting of $\Ass_{i+1}(x)$.  Otherwise we let $\Seq_{i+1}(x) := \Seq_i(x)$. Finally, we let  $\Seq(x) := \Seq_k(x)$.

To stress the dependence on $\cal L$ we might write $\Ass_i(\cal L,x)$ and $\Seq(\cal L,x)$ instead of $\Ass_i(x)$ and $\Seq(x)$, respectively. The following lemma is easy to check.

\blemm[equivalent]
Let $\cal K$ and $\cal L$ be equivalent decorated landscapes. For all $x\in V(G_{\cal L}) = V(G_{\cal K})$ we have $\Seq(\cal K,x) = \Seq(\cal L,x)$.\qed
\elemm

\subsection{Landscape restrictions}\mbox{}
\newcommand{\Res}{\operatorname{Res}}
 
Let $\cal L$ be a landscape and let $H$ be a subgraph of $G_{\cal L}$. We consider $H$ to be a variable graph by inducing the order on $N_H(x)$ and $N_G^\text{in}(x)$ from $N_{G_{\cal L}}(x)$ and $N_{G_{\cal L}}^\text{in}(x)$ respectively.   

We proceed to define a new landscape $\Res_{\cal L}(H)$ referred to as the \textit{restriction of $\cal L$ to $H$}. For brevity let us denote $\Res_{\cal L}(H)$ by $\cal K$. We set $G_{\cal K} := H$, and the local rule $\mathbf R_{\cal K}$ is defined as follows. If for $x\in V(H)$ we have $\Var_H(x) = \Var_{G_\cal L}(x)$ then we set $\mathbf R_{\cal K}(x) := \mathbf R_{\cal L}(x)$. Otherwise we let $\mathbf R_{\cal K}(x) := \cal P(b^{\Var_{H}(x)})$.

Finally we define $\Trees(\cal K)$ as follows: for every tree $\tau$ in $T(\cal L)$ we add to $\Trees(\cal K)$ the connected components of the forest $\tau \cap H$. This finishes the definition of the pseudo-landscape $\cal K$. By construction it is a landscape. If $\cal L$ is decorated then we induce a decoration on $\cal K$ from $\cal L$ in the only sensible way.

We will say that the restriction $\Res_{\cal L}(H)$ is \textit{faithful at a vertex $x \in V(H)$} if $\Var_{H} (y) = \Var_{G_{\cal L}}(y)$ for every $y$ such that $x\in \Var_{G_\cal L}(y)$. The following lemma is easy to check.

\blemm[restrict]
Let $\cal K$ be a restriction of $\cal L$ which is faithful at some vertex $x$. Then $\Seq_{\cal K}(x) = \Seq_{\cal L}(x)$. \qed
\elemm

If $V\subset V(G_{\cal L})$ then $\Res_{\cal L}(V)$ denotes the restriction to the graph induced on $V$ from $G_{\cal L}$.

\section{Local Lemma on Borel graphs of subexponential growth}
\subsection{Analysis of the Moser-Tardos algorithm}\mbox{}

Let $\cal G$ be a Borel graph and let $d\colon V(\cal G) \times V(\cal G) \to \N\cup\{\infty\}$ be the metric induced by the graph distance in $\cal G$. For $F\subset V(\cal G)$ and $i\in \N$ we define  $F_{-i}$ to be the set of those $x\in F$ such that for any $y\notin F$ we have $d(x,y)\ge i$. Furthermore, for $x\in V(\cal G)$ and $r\in \N$ we let $B(x,r):= \{y\in V(\cal G)\colon d(x,y) \le r\}$ be the ball of radius $r$ around $x$. We say that a partition $\pi$ of $V(\cal G)$ is \textit{$r$-sparse} if for every $x\ssin V(\cal G)$ the different points of $B(x,r)$ belong to different parts of $\pi$.  

The great majority of the work needed to obtain a Borel version of Local Lemma is contained in the following theorem. Recall that  $X(\cal G, \mathbf R)$  is defined to be the set $\{x\in V(\cal G)\colon \mathbf R(x) \neq b^{\Var_{\cal G}(x)}\}$, i.e. it is the subset of $V(\cal G)$ where the rule $\mathbf R$ is non-trivial. 

\btheo[main]
Let $(\cal G, \mathbf R,\I,\pi,f)$ be a long Moser-Tardos tuple such that $\sup_{x\in V(\cal G)} N_{\cal G}(x) < \infty$. Let us denote $D:= \sup_{x\in X(\cal G, \mathbf R)} |\Var(x)|$,  $\De := \sup_{x\in V(\cal G)} |N_{\Rel(\cal G)}(x)|$, and $\be := \sup_{x\in \Om}  |\mathbf R^c(x)|$, and let us assume that there are $\eps\in (0,1)$ and $n\in \N$ be such that the following conditions hold.
\begin{enumerate}[1., nosep]
\item For any bounded Borel function  $g\colon V(\cal G) \to \N$  supported on $X(\cal G,\mathbf R)$ there exists a finite set $F\subset V(\cal G)$ which is contained in $B(y,n)$ for some $y\in V(\cal G)$, such that
\beq[jaba]
\max_{x\in V(\cal G)}g(x) \,\,\le\,\, \sum_{x\in F} g(x) \,\, < \,\,(1+\eps)\,\sscdot \sum_{x\in F_{-3}} g(x).
\eeq

\item  We have $\De\ge 2$ and 
\beq[good]
    b^{(1-\eps)\sscdot D} > \frac{\De^\De}{(\De-1)^{\De-1}}\sscdot \be,
\eeq

\item The partition $\pi$ is $n$-sparse.

\end{enumerate}

Then there exist constants $K>0$ and $M\in \N$ such that for all $k\in \N$ and all $N>M$, the probability that the $k$-step Moser-Tardos algorithm resamples some point more than $N$ times is bounded by $b^{-NK}$.

\etheo

\brema \begin{enumerate}[(i),nosep,topsep=0pt,partopsep=0pt, itemsep=0mm, wide]
\item As explained in Remark~\ref{rapa}, the implied probability space in the theorem above is the space $b^{p\times k}$ with the uniform probability measure.

\item We also note that $\sup_{x\in V(\cal G)} N_{\cal G}(x) < \infty$ implies $\be, D, \De <\infty$.
\end{enumerate}
\erema

\bpf[Proof of Theorem~\ref{main}]
We start by proving the following reduction.
\begin{claim}
Without any loss of generality we may assume that for all $x \in X(\cal G, \mathbf R)$ we have $|\Var(x)| = D$.
\end{claim}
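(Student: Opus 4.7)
The plan is to pad $\cal G$ with ``dummy'' out-neighbours so that every $x \ssin X(\cal G, \mathbf R)$ attains out-degree exactly $D$, and then invoke Lemma~\ref{koma} to transfer the MTA resampling probabilities back to $\cal G$. Concretely, for each $x \ssin X(\cal G, \mathbf R)$ with $|\Var_{\cal G}(x)| = d < D$, introduce $D - d$ fresh vertices (distinct across different $x$) and adjoin each as a new out-neighbour of $x$ with no other incidences. This produces a Borel variable graph $\wb{\cal G}$ containing $\cal G$ as an induced subgraph. Extend the local rule by setting $\wb{\mathbf R}(x) := \{g \ssin b^{\Var_{\wb{\cal G}}(x)} : g|_{\Var_{\cal G}(x)} \ssin \mathbf R(x)\}$ for $x \ssin V(\cal G)$ --- the cylindrical extension, essentially forced by condition (iii) of Lemma~\ref{koma} --- and $\wb{\mathbf R}(y) := b^{\Var_{\wb{\cal G}}(y)}$ (the full set) at every new vertex $y$, so that $X(\wb{\cal G}, \wb{\mathbf R}) = X(\cal G, \mathbf R) \subset V(\cal G)$.

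Each new vertex $y$ has $\Var_{\wb{\cal G}}(y) = \emptyset$, hence is $\Rel$-isolated in $\wb{\cal G}$, and $\Rel(\wb{\cal G})$ restricted to $V(\cal G)$ coincides with $\Rel(\cal G)$. Accordingly, $\wb{\I}(U) := \I(U \cap V(\cal G)) \cup (U \setminus V(\cal G))$ defines an independence function for $\Rel(\wb{\cal G})$ that agrees with $\I$ on Borel subsets of $V(\cal G)$. The partition $\pi$ extends to $\wb{\pi}$ by dispatching the new vertices to a bounded number of fresh parts, which preserves $n$-sparsity since each new vertex has a unique neighbour in $\wb{\cal G}$; the function $f$ extends arbitrarily to $\wb{f}$. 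All five conditions of Lemma~\ref{koma} then hold --- in particular (v), because each new vertex has exactly one in-edge and it lies in $V(\cal G)$ --- so the probability that the $k$-step MTA resamples some point more than $N$ times is identical for the two long tuples.

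The main obstacle I anticipate is checking that the hypotheses of Theorem~\ref{main} still hold for $(\wb{\cal G}, \wb{\mathbf R}, \wb{\I}, \wb{\pi}, \wb{f})$, which now satisfies $|\Var_{\wb{\cal G}}(x)| = D$ for every $x \ssin X(\wb{\cal G}, \wb{\mathbf R})$. The values of $D$ and $\De$ are unchanged by the construction, and condition 3 is built into $\wb{\pi}$. Condition 1 transfers after enlarging $n$ by a small additive constant, because balls in $\wb{\cal G}$ around vertices of $V(\cal G)$ differ from those in $\cal G$ only by appended leaves. The delicate point is condition 2: the padded $\wb{\beta} = \sup_x b^{D - |\Var_{\cal G}(x)|}\,|\mathbf R^c(x)|$ can strictly exceed $\beta$, but it equals $b^D\,p_{\max}$ with $p_{\max} = \sup_x |\mathbf R^c(x)|/b^{|\Var_{\cal G}(x)|}$, the invariant failure probability. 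Thus condition 2 in the padded tuple becomes a statement about $p_{\max}$ whose careful verification from the original hypothesis --- perhaps at the cost of a modest adjustment of $\eps$ --- is the subtlest part of the reduction. Once this is handled, it indeed suffices to prove Theorem~\ref{main} under the additional uniformity assumption $|\Var(x)| = D$ for all $x \ssin X(\cal G, \mathbf R)$.
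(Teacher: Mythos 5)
Your construction is the same as the paper's: pad each vertex with leaf out-neighbours carrying the trivial rule, extend $\mathbf R$ cylindrically, extend $\I$, $\pi$, $f$ as you describe, and invoke Lemma~\ref{koma}. (The paper adjoins a uniform copy $V(\cal G)\sstimes\De$ rather than a varying number of fresh leaves per vertex, which is cosmetic.) Your worry about condition~2 is real, and in fact somewhat more serious than ``a modest adjustment of $\eps$'' would cure. Writing $\wb{\be} = b^{D}\,p_{\max}$ with $p_{\max} := \sup_x |\mathbf R^c(x)|/b^{|\Var(x)|}$, the padded condition~2 reads $b^{-\eps D} > \frac{\De^{\De}}{(\De-1)^{\De-1}}\,p_{\max}$, whereas the original condition~2 only gives $b^{-\eps D} > \frac{\De^{\De}}{(\De-1)^{\De-1}}\cdot\frac{\be}{b^{D}}$; since $\frac{\be}{b^{D}} \le p_{\max}$ with strict inequality as soon as $\sup_x|\mathbf R^c(x)|$ is attained at a vertex with $|\Var(x)|<D$, one can exhibit instances satisfying condition~2 for some $\eps\in(0,1)$ for which \emph{no} $\eps'\in(0,1)$ satisfies the padded version. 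The paper's own proof just asserts that the verification is ``straightforward'', which silently skips exactly this issue. The reason it is harmless is that in Theorem~\ref{bll} --- the only consumer of Theorem~\ref{main} --- condition~2 is produced from the uniform hypothesis~\eqref{bank}, which is precisely a bound on $p_{\max}$ and is therefore invariant under padding. In short: restate condition~2 of Theorem~\ref{main} in terms of $p_{\max}$, the invariant you correctly identified, and your reduction then goes through verbatim.
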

\bpf[Proof of Claim] If this is not the case then we construct a new long Moser-Tardos tuple  $(\wb{\cal G}, \wb{\mathbf R}, \wb{\I}, \wb{\pi},\wb{f})$ as follows.

We define $V(\wb {\cal G}) := V(\cal G) \cup (V(\cal G)\sstimes\De)$, and 
$$
E(\wb{ \cal G}) := E(\cal G) \cup \{(x,(x,i))\colon x\in V(\cal G), i\ge |\Var_{\cal G}(x)|\}.
$$
It is easy to see that $\wb{\cal G}$ is a Borel graph. It is also easy to see that $V(\cal G)$ and $V(\cal G)\sstimes\De$ are disjoint sets.\footnote{To prove it one needs to fix a definition of an ordered pair. We use the most standard definition due to Kuratowski. An alternative would be to explicitly define $V(\wb{\cal G})$ to be a disjoint union of $V(\cal G)$ and $V(\cal G)\sstimes \De$. } It follows that $\cal G$ is an induced  subgraph of $\wb{\cal G}$, and by construction we see that for all $x\in V(\cal G)$ we have $|\Var_{\wb{\cal G}}(x)| = D$.

Let $\wb{\mathbf R}$ be the Borel local rule for $\cal G$ defined as follows. For $x\in V(\cal G)$ we let $f \in \wb{\mathbf R}(x)$ if and only if the restriction of $f$ to $\Var_{\cal G}(x)$ is an element of $\mathbf R(x)$. For $x\notin V(\cal G)$ we let $\wb{\mathbf R}(x)$ be the unique empty function (note that for $x\notin V(\cal G)$ the set $\Var_{\wb{\cal G}}(x)$ is empty).

Note that $\Rel(\cal G)$ is an induced subgraph of $\Rel(\wb{\cal G})$, and that its complement is a Borel graph with no edges. This allows to easily extend the independence function $\I$ for $\Rel(\cal G)$ to an independence function $\wb{\I}$ for $\Rel(\wb{\cal G})$.

Recall that $p$ denotes the number of parts in the partition $\pi$. We extend the partition $\pi$ of $V(\cal G)$ to a partition $\wb\pi$ of $V(\wb{\cal G})$ by adding to it sets $W_j\times \{i\}$, for $j\in p$ and $i\in \De$, in an arbitrary order.

It is straightforward to see that the tuple $(\wb{\cal G}, \wb{\mathbf R}, \wb{\I}, \wb{\pi},\wb{f})$ fulfils all the conditions of Theorem~\ref{main}, and additionally for all $x\in X(\cal G, \mathbf R)$ we have $|\Var_{\wb{\cal G}}(x)| = D$.  Now the statement of the claim follows from Lemma~\ref{koma}.
\epf

Thus from now on we assume that for all $x \in X(\cal G, \mathbf R)$ we have $|\Var(x)| = D$. Let us define, for all $k\in\N$ and all $\rnd\ssin b^{p\times k}$, a decorated landscape $\cal L = \cal L(\rnd)$ using the functions $\MT_0(\rnd),\ldots, \MT_k(\rnd)$ given by the $k$-step Moser-Tardos algorithm.


We let $G_{\cal L} := \cal G$, $\mathbf R_{\cal L} := \mathbf R$, and the forest in $\Canvas(G_{\cal L})$ is defined as follows.  For $i=0,\ldots, k-1$ we let $V(\cal L)_i := \I\B(M_i)\sstimes\{i\}$. We claim that for $i\in k{-}1$ and $x\in V(\cal L)_{i+1}$ there exists  $y\in V(\cal L)_i$ such that $\wb x$ and $\wb y$ are connected in $\Rel(\cal G)$. Indeed, otherwise it is easy to check that $x$ must be in $B(M_i)$ which contradicts the maximality property of $IB(M_i)$. 

Now given $x\ssin V(\cal L)_{i+1}$ let $z$  be the vertex in $V(\cal L)_i$ such that $\wb z \le \wb y$ for all $y\in V(\cal L)_i$ for which $\wb x$ and $\wb y$ are connected in $\Rel(\cal G)$. In particular there is an edge from $z$ to $x$ in $\Canvas(G_{\cal L})$ and we add this edge to our forest.

The decoration is defined as follows. The function $\pi\colon V(\cal G)\to  \N$ is already defined; we let $\Final(\cal L) := \MT_k$ and for $x = (\wb x, \ell(x))\in V(\cal L)$  we let $\prev(x)$ to be the function $\MT_{\ell(x)}$ restricted to $\Var(x)$.

The following claim is easy to check.

\begin{claim}
For $x\in V(\cal G)$ we have $\Seq_{\cal L(\rnd)}(x) = \used(\rnd, x)$.\qed
\end{claim}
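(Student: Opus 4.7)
The plan is to prove the claim in two stages: first, establish by downward induction on $i$ that the assignment function $\Ass_i$ coincides with the Moser-Tardos snapshot $\MT_i$ as a function on $V(G_{\cal L})=V(\cal G)$; second, use this identity to unfold $\Seq(x)$ entry by entry and match it against $\used(\rnd,x)$.

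The base case $\Ass_k=\MT_k$ is immediate from the stipulation $\Final(\cal L):=\MT_k$ in the decoration. For the inductive step, assume $\Ass_i=\MT_i$ and fix $y\in V(G_{\cal L})$. If $y\in\Var(\I\B(\MT_{i-1}))$, then by the landscape property (distinct elements of $V(\cal L)_{i-1}$ have distance at least $2$ in $\Rel(\cal G)$, hence pairwise disjoint $\Var$-sets) there is a unique $x\in V(\cal L)_{i-1}$ with $y\in\Var(\wb x)$; since $\prev(x)$ was defined as the restriction of $\MT_{i-1}$ to $\Var(\wb x)$, this yields $\Ass_{i-1}(y)=\prev(x)(y)=\MT_{i-1}(y)$. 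If instead $y\notin\Var(\I\B(\MT_{i-1}))$, then the MT recipe fixes $\MT_i(y)=\MT_{i-1}(y)$ while the $\Ass$ recipe fixes $\Ass_{i-1}(y)=\Ass_i(y)$, and the inductive hypothesis closes the case.

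For the second stage, I will observe that $\Seq$ accumulates an entry at step $i\to i+1$ precisely when $x\in\Var(\I\B(\MT_i))$, in which case the appended entry is $\Ass_{i+1}(x)=\MT_{i+1}(x)=\rnd(\pi(x),h_i(x))$ by the MT resampling formula. Enumerating the resampling indices $i_1<i_2<\cdots$ of $x$, the defining recurrence $h_{j+1}(x)=h_j(x)+1$ exactly when $x\in\Var(\I\B(\MT_j))$ gives $h_{i_\ell}(x)=\ell-1$. Hence the successive entries of $\Seq(x)$ in order read $\rnd(\pi(x),0),\rnd(\pi(x),1),\ldots$, which coincides with $\used(\rnd,x)$ term by term.

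The only mildly delicate step is the appeal to the landscape property: it is essential both for the uniqueness of $x$ in the inductive case and, more importantly, for ensuring that the $\prev$-based clause in the definition of $\Ass_{i-1}$ gives a consistent value (since a priori several $x\in V(\cal L)_{i-1}$ could have competing claims on $y$). Everything else is routine bookkeeping, so I do not anticipate further obstacles.
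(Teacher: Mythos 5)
The paper records this claim without proof (it is marked as easy to check), so there is nothing to compare against; your proof is correct and supplies exactly the verification the authors deferred to the reader. Both stages check out: the downward induction showing $\Ass_i=\MT_i$ correctly uses the definition of $\prev(x)$ as $\MT_{\ell(x)}$ restricted to $\Var(\wb x)$, together with the independence of $\I\B(\MT_{i-1})$ in $\Rel(\cal G)$, to pin down the unique $x\in V(\cal L)_{i-1}$ with $y\in\Var(\wb x)$; and the unrolling of $\Seq(x)$ against the recursion for $h_j$ correctly matches the accumulated entries with $\rnd(\pi(x),0),\rnd(\pi(x),1),\ldots$ in order. One caveat worth noting: what your argument literally yields is $\Seq_{\cal L(\rnd)}(x)=\bigl(\rnd(\pi(x),0),\ldots,\rnd(\pi(x),h_k(x)-1)\bigr)$, a sequence of length $h_k(x)$, whereas the paper's displayed definition of $\used(x)$ gives it length $h_{k-1}(x)$. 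This is an off-by-one slip in the paper's definition of $\used$ and $\unused$ (the step from $\MT_{k-1}$ to $\MT_k$ can consume one more random bit, so the correct cutoff index is $h_k(x)$); your reading, which silently corrects this, is the intended one and is also what the subsequent injectivity argument for $\zeta$ actually needs.
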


Let $g(\rnd)\colon V(\cal G)\to b$ be the function $x\mapsto |V(\cal L)\,\,\cap\,\, (\{x\} \sstimes\N)|$. It is routine to check that $g(\rnd)$ is Borel. Furthermore, clearly for $\rnd\in b^{p\times k}$ we have that $g(\rnd)$ is bounded by $k$. Let $F(\rnd)\subset V(\cal G)$ be the set guaranteed for the function $g(\rnd)$ by the first assumption in the statement of Theorem~\ref{main}.

\newcommand{\concat}{\operatorname{concat}}

Recall that $\mathbb S$ is the set of all finite $b$-valued sequences.  Let $L$ be the set of (the iso-classes of) the grounded landscapes, including the empty landscape. The length of $s\in \mathbb S$ will be denoted by $|s|$. If $h\colon p\to \mathbb S$ then we let $\concat(h)$ be the concatenation of the sequences $h(0),h(1),\ldots, h(p-1)$. 

We proceed to define an injective map  
$$
\zeta\colon b^{p\times k} \to \cal P(p) \times \mathbb S \times L.
$$
If $|V(\cal L(\rnd))|$ is empty then we let 
$$
\zeta(\rnd):=(\emptyset,\concat(\rnd),\text{empty landscape}).
$$ 

Otherwise we let $\cal K(\rnd)$ be a grounded landscape equivalent to $\Res_{\cal L(\rnd)}(F(\rnd))$, and we define
$$
\zeta(\rnd):= (\pi(F(\rnd)_{-2}), \concat(U(\rnd)), \cal K(\rnd)),
$$
 where $U(\rnd)$ is defined as follows: if $i \in \pi(F(\rnd)_{-2})$ then we let $U(\rnd)(i)$ be the sequence $\unused(\rnd,x)$, and otherwise we let $U(\rnd)(i)$ be the sequence $\rnd(i)$.

Let us argue that $\zeta$ is indeed an injection. First note that for any set $F$ the restriction of  $\cal L(\rnd)$ to $F$ is faithful at all vertices in $F_{-2}$. By Lemmas~\ref{equivalent} and~\ref{restrict} It follows that if $y\in F(\rnd)_{-2}$ then 
$\Seq_{\cal K(\rnd)}(y)$ is  equal to $\used(\rnd,y)$.

Let us argue that  we can recover the function $U(\rnd)$ from the triple $(\pi(F(\rnd)_{-2})$, $ \concat(U(\rnd))$, $\cal K(\rnd))$. It is enough to show that we can recover the lengths $|U(\rnd)(i)|$ for $i\in p$ from the pair  $(\pi(F(\rnd)_{-2}), \cal K(\rnd))$. This is possible since since $\pi$ assigns different values to all the vertices of $G_{\cal K(\rnd)}$. In particular knowing $\pi(F(\rnd)_{-2})$ allows us to deduce which vertices of $G_{\cal K(\rnd)}$ belong to $F(\rnd)_{-2}$, and so we can recover the lengths $|U(\rnd)(i)|$ as follows:
$$
|U(\rnd)(i)| = \left\{ 
	\begin{array}{l l}
  		k - |\Seq_{\cal K(\rnd)}(y)| & \,\, \mbox{if $i \in \pi(F(\rnd)_{-2})$,}\\
		k & \,\, \mbox{otherwise}\\ 
	\end{array} \right. 
$$

Now it is easy to recover $\rnd$. For $i\,{\notin}\,  \pi(F(\rnd)_{-2})$ we have $\rnd(i) = U(\rnd)(i)$, and for $x\ssin F(\rnd)_{-2}$ we have that $\rnd(\pi(y))$ is a concatenation of $\unused(\rnd, y) = U(\rnd)(\pi(y))$ and $\used(\rnd,y) = \Seq_{\cal K(\rnd)}(y)$. This establishes that  the map $\zeta$ is an injection.

Let us denote $N_1 := \max_{y\in V(\cal G)} |B(y,n)|$. Since we assume $\sup_{x\in V(\cal G)} N_{\cal G}(x) < \infty$, we have that $N_1$ is finite.

\begin{claim}\label{l-sds}
Let $N_2\in \N$. Let $\cal K$ be a grounded landscape of type $(D,\De,\be,  N_1,N_2,p)$ and let $A\in \cal P(p)$. The number of points in $\im(\zeta)$ with the first and  the third coordinate equal to, respectively, $A$ and $\cal K$ is bounded by
$$
 b^{pk-(1-\eps)N_2\sscdot D}.
$$
\end{claim}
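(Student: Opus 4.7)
My approach will exploit the injectivity of $\zeta$ established in the proof of Theorem~\ref{main}: counting the fibre over $(A, \cal K)$ reduces to counting the possible middle coordinates $\concat(U(\rnd)) \in \mathbb{S}$, whose total length $L$ is determined by the pair $(A, \cal K)$ alone. For $i \notin A$ the block $U(\rnd)(i)$ has length $k$; for $i \in A$, the $n$-sparsity of $\pi$ together with $F(\rnd) \subset B(y, n)$ makes $\pi|_{V(G_{\cal K})}$ injective, so the vertex $y_i \in F(\rnd)_{-2}$ with $\pi(y_i) = i$ is recoverable from $\cal K$, and $|U(\rnd)(i)| = k - |\Seq_{\cal K}(y_i)|$. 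Setting $S := \sum_{y \in F(\rnd)_{-2}} |\Seq_{\cal K}(y)|$, one gets $L = pk - S$, and injectivity of $\zeta$ bounds the fibre size by $b^L$. It will therefore suffice to prove $S > (1-\eps)\, N_2 D$.

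My first step in bounding $S$ from below will be to identify $|\Seq_{\cal K}(y)|$ with a quantity intrinsic to the Moser--Tardos run. Combining Lemma~\ref{equivalent} with Lemma~\ref{restrict} (applied at vertices $y \in F_{-2}$, where the restriction $\Res_{\cal L(\rnd)}(F(\rnd))$ is faithful) yields $\Seq_{\cal K}(y) = \Seq_{\cal L(\rnd)}(y) = \used(\rnd, y)$, so $|\Seq_{\cal K}(y)|$ counts precisely how many times $y$ was resampled. My second step will be a double-counting identity: since each $\I\B(\MT_j)$ is $\Rel(\cal G)$-independent, for each level $j$ there is at most one $v \in V(\cal L(\rnd))_j$ with $y \in \Var_{\cal G}(\wb v)$, so $|\used(\rnd,y)| = |\{v \in V(\cal L(\rnd)) : y \in \Var_{\cal G}(\wb v)\}|$, and swapping orders of summation gives
$$
S = \sum_{v \in V(\cal L(\rnd))} |\Var_{\cal G}(\wb v) \cap F_{-2}|.
$$

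To convert this into a bound involving $N_2$, I will use that whenever $\wb v \in F_{-3}$ one has $\Var_{\cal G}(\wb v) \subset B(\wb v, 1) \subset F_{-2}$, and the opening reduction of the proof of Theorem~\ref{main} ensures $|\Var_{\cal G}(\wb v)| = D$ (since $\B(\MT_j) \subset X(\cal G, \mathbf R)$ always, so $\wb v \in X(\cal G, \mathbf R)$). With $g(\rnd)(x) := |V(\cal L(\rnd)) \cap (\{x\} \sstimes \N)|$ as in the proof, this delivers $S \ge D\, \sum_{x \in F_{-3}} g(\rnd)(x)$. Since $|V(\cal K)| = N_2 = \sum_{x \in F(\rnd)} g(\rnd)(x)$, applying the first assumption of Theorem~\ref{main} to $g(\rnd)$ and $F(\rnd)$ gives
$$
\sum_{x \in F_{-3}} g(x) > \frac{1}{1+\eps}\, \sum_{x \in F} g(x) = \frac{N_2}{1+\eps} \ge (1-\eps)\, N_2,
$$
whence $S > (1-\eps) D N_2$ and $L < pk - (1-\eps) N_2 D$, so the fibre count is bounded by $b^L \le b^{pk - (1-\eps) N_2 D}$. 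The hardest part will be setting up the double-counting identity cleanly: it is the mechanism that converts the bounded-growth hypothesis on $g$ into the desired exponent, and it relies crucially on the reduction $|\Var(\wb v)| = D$ made at the outset of the proof of Theorem~\ref{main}, together with the passage from $F_{-2}$ to $F_{-3}$ in controlling the boundary effects.
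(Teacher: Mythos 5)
Your proposal is correct and follows essentially the same path as the paper: count the possible middle coordinates $\concat(U(\rnd))$, observe that their length is $pk$ minus the total number of resamplings of points in $F(\rnd)_{-2}$, and lower-bound that total by $D$ times $|V(\cal L(\rnd))\cap(F_{-3}\times\N)|$, which inequality~\eqref{jaba} relates to $N_2$. The only difference is cosmetic — you phrase the lower bound as a formal double-counting identity over pairs $(y,v)$, where the paper states the same inequality directly.
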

\bpf[Proof of Claim]
We are counting the possible sequences $\concat(U(\rnd))$ for those $\rnd\in b^{p\times k}$ such that $\pi(F(\rnd)_{-2}) = A$ and $\cal K(\rnd) = \cal K$. 
 
Clearly $\concat(U(\rnd))$ is a $b$-valued sequence of length equal to $p\sscdot k$ minus ``the number of bits used by the $k$-step Moser-Tardos algorithm for resampling points in $F(\rnd)_{-2}$'', i.e.
$$
\sum_{x\in  F(\rnd)_{-2}} h_k(x).
$$
The latter sum is easily seen to be at least $|V(\cal L)\,\,\cap\,\, (F(\rnd)_{-3})\sstimes\N)|$ times $D$. This is because whenever we have a point $y\in V(\cal L)\,\,\cap\,\, (F(\rnd)_{-3}\sstimes \N)$, we have that in the Moser-Tardos algorithm's passage from $\MT_{\ell(y)}$ to $\MT_{\ell(y)+1}$ all the elements of $\Var(\wb y)$ got resampled, and clearly $\Var(\wb y)\subset F(\rnd)_{-2}$.

The number of points in $V(\cal L)\,\,\cap\,\, (F(\rnd)_{-3}\sstimes\N)$ is, by the inequality~\eqref{jaba},  at least $\frac{1}{1+\eps}N_2 \ge (1-\eps)N_2$.  Thus all in all we see that $\concat(U(\rnd))$ is a $b$-valued sequence of length at most $pk - (1-\eps)N_2\sscdot D$, which establishes the lemma.
\epf

For $N_2\in \N$ let $P(N_2)$ be the probability that $\rnd\in b^{p\times k}$ is such that $V(\cal K(\rnd)) = N_2$.  Since $\zeta$ is an injection we have that $P(N_2)\sscdot b^{p\times k}$ is equal to the number of elements in $\im(\zeta)$ whose fourth coordinate is a grounded landscape of type $(D,\De,\be, N_1,N_2,p)$. This, together with Lemma~\ref{l-fff} and Claim~\ref{l-sds} allows us to bound $P(N_2)$  from above. After taking base-$b$ logarithms, denoted by $\log$,  we obtain 
\begin{multline}
\log(P(N_2)\cdot b^{pk}) = \log(P(N_2)) + pk  \le\\
\le   pk-(1-\eps)N_2\sscdot D 
+  \log(C) + N_1\log(N_2) + N_2 \log (\frac{\De^\De}{(\De-1)^{\De-1}}\sscdot \be)
\end{multline}
where $C$ is the constant from Lemma~\ref{l-fff}. In particular, $C$ does not depend on $N_2$. 

After subtracting $pk$ from both sides and  dividing both sides by $N_2$, we obtain
$$
\frac1{N_2}\log(P(N_2)) \le \frac{\log(C) + N_1\log(N_2)}{N_2} - (1-\eps)D + \log(\frac{\De^\De}{(\De-1)^{\De-1}}\sscdot \be),
$$
Note that the term $\frac{\log(C) + N_1\log(N_2)}{N_2}$ converges to $0$ as $N_2\to \infty$. Because of that and the assumption~\eqref{good}, we see that there exist constants $\wb K>0$ and $M \ssin \N$, both independent of $N_2$, such that for all $N_2>M$ we have
$$
\log(P(N_2)) \le -N_2\sscdot \wb K,
$$
and so $P(N_2) \le b^{-N_2\sscdot\wb K}$.

Let $Q(N_2)$ be the probability that $\rnd\in b^{p\times k}$ is such that $|V(\cal K(\rnd))| > N_2$. 
Clearly $Q(N_2) = \sum_{n_2 > N_2} P(n_2)$, and after making $\wb K$ smaller if necessary we see that for $N_2>M$ we have $Q(N_2) \le b^{-N_2\sscdot \wb K}$. 

The claim of the theorem now easily follows: Let $G\subset b^{p \times k}$ be the set of those $\rnd$ such that $V(\cal K(\rnd)) >N_2$. We just showed that the measure of $G$ is at most $b^{-N_2\sscdot \wb K}$. We claim that if $\rnd\notin G$ then the $k$-step Moser-Tardos algorithm  resamples each point less than $\De \sscdot N_2$ times.

Indeed, let us assume that this is not the case and let $x\in V(\cal G)$  be a point which is resampled at least $\De \sscdot N_2$ times. However, since $\De = \sup_{x\in V(\cal G)} |N_{\Rel(\cal G)}(x)|$, we see that in particular there are at most $\De$ points $y\in V(\cal G)$ such that $x\in \Var(y)$. It follows that for one of these points $y$ we have $|V(\cal L(\rnd))\,\,\cap\,\, \{y\}\sstimes\N| \ge N_2$. Hence, by by the choice of the set $F(\rnd)$,  we have $|V(\cal K(\rnd))|\ge N_2$, which is a contradiction with the fact that $\rnd\notin G$.

Thus in order to finish the proof of Theorem~\ref{main} we would let $K := \frac{\wb K}{\De}$ if we only cared about numbers $N$ which are multiples of $\De$. It is straightforward to see that taking $K:= \frac{\wb K}{2\De}$ works for all $N$. 
\epf

\subsection{Borel Local Lemma}\mbox{}

Let us start by showing that the first item of Theorem~\ref{main} is automatically fulfilled for arbitrary $\eps$ and large enough $n$ if $\cal G$ is of uniformly subexponential growth.

\blemm[growth]
Let $\eps>0$ and let us assume that $n\in\N$ is such that for all $x\in V(\cal G)$ we have $B(x,3n)<(1+\eps)^n$. Let $g\colon V(\cal G) \to \N$  be a bounded Borel function which assumes its maximum at some point $y\in V(\cal G)$. Then there exists $r\in\{3,\ldots, 3n\}$ such that 
\beq[jaba]
\sum_{x\in B(y,r)} g(x)  < (1+\eps)\cdot \sum_{x\in B(y,r-3)} g(x).
\eeq
\elemm

\bpf
Suppose that for all $r\in \{3,\ldots,3n\}$ the inequality \eqref{jaba} does not hold. Then 
$$
\sum_{x\in B(y,3n)} g(x) \ge (1+\eps)^n\sscdot g(y).
$$

Since $B(y,3n)<(1+\eps)^n$, we deduce that for some $z\in B(x,3n)$ we have $g(z)>g(y)$, which contradicts the maximality of $g(y)$.
\epf

We are ready to give a criterion when the functions returned by the Moser-Tardos algorithm converge pointwise to a  Borel function.

\btheo[bll]
Let $(\cal G, \mathbf R,\I,\pi,f)$ be a long Moser-Tardos tuple such that $\cal G$ has a uniformly subexponential growth. Let $\De$ be the maximal degree in $\Rel(\cal G)$,  and let us assume that $\De\ge 2$ and that for all $x\ssin V(\cal G)$ we have
\beq[bank]
1-\frac{|\mathbf R(x)|}{|b^{\Var(x)}|} < \frac{(\De-1)^{\De-1}}{\De^\De}.
\eeq

If $\pi$ is $m$-sparse for sufficiently large $m$ then there exists a constant $K>0$ such that the probability that the $\N$-step Moser-Tardos algorithm for $(\cal G, \mathbf R, \I, \pi, f)$ resamples some point more than $N$ times is bounded by $b^{-NK}$. 

In particular, the functions $\MT_0, \MT_1,\ldots$ returned by the Moser-Tardos algorithm converge with probability $1$ to a Borel function which satisfies $\mathbf R$.
\etheo

\bpf
Let us first explain the ``In particular'' part. Note that the functions $\MT_i$, $i\in \N$ are bounded Borel functions. Thus if their pointwise limit exists, it is  automatically  a Borel function.

By the main part of the theorem we have that with probability $1$ there exists a constant $N$ such that the $\N$-step Moser-Tardos Algortihm resamples at most $N$ times. Since $\cal G$ is of uniformly subexponential growth, in particular balls of finite radii in $V(\cal G)$ are finite.

Thus for every $x\in V(\cal G)$ there exists $i$ such that for all $j\ge i$ we have that $B(x,4)\cap \I\B(\MT_j) =\emptyset$. This shows that in fact $B(x,2)\cap \B(\MT_j) = \emptyset$ for all $j\ge i$. This implies that for $j\ge i$ we have that the restriction of $\MT_j$ and $\MT_i$ to $B(x,2)$ are equal. This together with the fact that $x\notin \B(\MT_j)$ for $j\ge i$ establishes the ``In particular'' part.

\begin{claim*} In order to prove the theorem it is enough to establish the existence of a constant $K>0$ with the property that for every $k$ the probability that there exists $x\in V(\cal G)$ such that the $k$-step Moser-Tardos algorithm resamples $x$  more than $N$ times is bounded by $b^{-NK}$. 
\end{claim*}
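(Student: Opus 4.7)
The plan is to realize the event that the $\N$-step algorithm resamples some point more than $N$ times as an increasing union of the corresponding cylinder events for the $k$-step algorithms, and then take probabilities. This is essentially a formal monotone-convergence argument, and the hypothesis of the claim is exactly what we need to pass to the limit.

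Concretely, I would let $A_N \subset b^{p\times \N}$ denote the set of those $\rnd$ for which the $\N$-step Moser-Tardos algorithm resamples some $x \in V(\cal G)$ more than $N$ times, and for each $k\in\N$ let $A_N^k \subset b^{p\times \N}$ denote the analogous event for the $k$-step algorithm (where we view the $k$-step algorithm as a function of $\rnd \in b^{p\times \N}$ via its restriction to $b^{p\times k}$). Since the $k$-step algorithm depends only on $\rnd|_{p\times k}$, the set $A_N^k$ is a cylinder, so its measure in $b^{p\times \N}$ with respect to the product measure coincides with its measure in $b^{p\times k}$ with respect to the uniform measure. The hypothesis of the claim therefore says that $P(A_N^k) \le b^{-NK}$ holds uniformly in $k$.

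Next I would verify two easy set-theoretic properties of the family $\{A_N^k\}_{k\in\N}$. First, by the inductive definition in Section~\ref{mtalik}, the sequence $j \mapsto h_j(\rnd,x)$ is non-decreasing for each fixed $\rnd$ and $x$, so $A_N^k \subseteq A_N^{k+1}$. Second, $A_N = \bigcup_k A_N^k$: if $\rnd \in A_N$ then by the definition of the $\N$-step algorithm there exist $x$ and $k$ with $h_k(\rnd,x) > N$, whence $\rnd \in A_N^k$; conversely each $A_N^k \subseteq A_N$ because the $\N$-step algorithm literally extends the $k$-step one. Continuity of the probability measure from below then gives $P(A_N) = \lim_k P(A_N^k) \le b^{-NK}$, which is exactly the main conclusion of Theorem~\ref{bll}.

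I expect no real obstacle here; the claim is just a formal passage from finite-step cylinder estimates to the infinite product measure on $b^{p\times \N}$, with the only content being the observation that the relevant events are cylinders whose measures are computed on $b^{p\times k}$ in accordance with the convention of Remark~\ref{rapa}.
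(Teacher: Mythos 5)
Your argument is correct and is essentially the same as the paper's; the paper phrases it in terms of the complementary ``good'' events $U_i$ (resampling each point at most $N$ times) and uses the decreasing intersection of their cylinder preimages in $b^{p\times\N}$, whereas you work with the ``bad'' events and their increasing union, but the two are just dual formulations of the same continuity-of-measure argument over cylinder sets.
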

\bpf[Proof of Claim]
For $i\in \N$ the product measure on on $b^{p\times i}$ will be denoted by $\mu_i$, and the product measure on $b^{p\times \N}$ will be denoted by $\mu$. 
Let $\pi_i\colon b^{p\times \N}\to b^{p\times i}$ be the restriction to the first $i$ coordinates. 

Let $U_i\susbet b^{p\times i}$ be the set of those $\rnd$ for which the $i$-step Moser-Tardos algorithm resamples each point at most $N$ times. Let us assume that for all $i\in \N$ we have $\mu_b(U_{i}) \ge 1-b^{-NK}$.

Let $V_i := \pi_a^{-1}(U_i)$. Note that we have $\mu(V_i) \ge 1 - b^{NK}$ and for $j>i$ we have $V_j\subset V_i$. It follows that $\mu(\bigcap_{i\in \N} V_i)\ge 1-b^{NK}$ and the claim follows since for any $\rnd\in \bigcap_{i\in \N} V_i$ the $\N$-step  Moser-Tardos algorithm resamples each point at most $N$ times.
\epf

Thus we proceed to show the existence of a constant $K>0$ such that for every $k$ the probability that the $k$-step Moser-Tardos algorithm resamples some point more than $N$ times is bounded by $b^{-NK}$. 

Let $D:= \sup_{x\in V(\cal G)} |\Var(x)|$  and $\be := \sup_{x\in \Om}  |\mathbf R^c(x)|$. Since we assume \eqref{bank}, we also have  
$$
\frac{\be}{b^{D}} < \frac{(\De-1)^{\De-1}}{\De^\De}.
$$

Thus we can fix a number $\eps\in(0,1)$ such that 
\beq[good]
    b^{(1-\eps)\sscdot D} > \frac{\De^\De}{(\De-1)^{\De-1}}\sscdot \be.
\eeq

Since $\cal G$ is of uniformly subexponential growth, we can fix a number $n\in \N$  such that 
\beq[dupa]
|B(x,3n)| < (1+\eps)^n.
\eeq

Let us assume that $\pi$ is $3n$-sparse. We claim that all the conditions of Theorem~\ref{main} are fulfilled with $\eps$ and $3n$. Indeed, the first item of Theorem~\ref{main} follows from Lemma~\ref{growth} and the second one follows from the inequality~\eqref{good}. Thus we can apply Theorem~\ref{main}, which finishes the proof of Theorem~\ref{bll}.
\epf

\begin{cory}[Borel Local Lemma]\label{cll}
Let $\cal G$ be a Borel variable graph and let $\mathbf R$ be a Borel local rule on $\cal G$. Furthermore let us assume that the graph $\Rel(\cal G)$ is of uniformly subexponential growth, and let $\De$ be the maximal degree in $\Rel(\cal G)$. If for all $x\in V(\cal G)$ we have
$$
1-\frac{|\mathbf R(x)|}{|b^{\Var(x)}|} < \frac{(\De-1)^{\De-1}}{\De^\De}.
$$
then there exists a Borel function $f\colon V(G)\to b$ which satisfies $\mathbf R$.
\end{cory}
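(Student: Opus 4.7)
The plan is to derive Corollary~\ref{cll} from Theorem~\ref{bll} by producing a suitable long Moser-Tardos tuple $(\cal G, \mathbf R, \I, \pi, f)$ meeting the hypotheses of that theorem. The probability bound on $p(x)$ and the subexponential growth hypothesis transfer essentially directly, so the real work is to construct the remaining data $\I$, $\pi$, $f$ in a Borel way and to check that Theorem~\ref{bll}'s requirement of subexponential growth of $\cal G$ follows from the corollary's requirement on $\Rel(\cal G)$.

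The easy ingredients are $\I$ and $f$: since $\Rel(\cal G)$ is locally finite of bounded degree $\De$, the appendix yields an independence function $\I$ for $\Rel(\cal G)$, and we may take $f$ to be the constant function $0$. The degenerate case $\De\le 1$ will be handled separately — in that case $\mathbf R$ decouples into single-vertex constraints, and the Borelness of $\mathbf R$ (which takes values in the countable set $\P(\mathbb S)$) allows a Borel selector from each non-empty $\mathbf R(x)$ directly.

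The substantive step is the construction of an $m$-sparse Borel partition $\pi$ of $V(\cal G)$, where $m$ is the bound supplied by Theorem~\ref{bll}. I would form the Borel graph $\cal G^{(m)}$ on $V(\cal G)$ whose edges connect distinct vertices at $\cal G$-distance at most $m$; once the balls $B_{\cal G}(x, m)$ are uniformly bounded in size, $\cal G^{(m)}$ is locally finite of uniformly bounded degree $D_m$, and iterating the appendix's independence-function construction $D_m+1$ times — peeling off a maximal $\cal G^{(m)}$-independent Borel subset of the remainder at each stage — produces a Borel partition of $V(\cal G)$ into $D_m+1$ classes, each of which is $\cal G^{(m)}$-independent and hence $m$-sparse in $\cal G$.

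With $(\cal G, \mathbf R, \I, \pi, f)$ in hand, Theorem~\ref{bll} gives with probability one a Borel pointwise limit of the Moser-Tardos iterates satisfying $\mathbf R$; any single realization of $\rnd$ in this full-measure set supplies the required Borel satisfying function. The principal obstacle is reconciling the growth hypotheses, since Theorem~\ref{bll} is phrased in terms of $\cal G$ whereas Corollary~\ref{cll} posits subexponential growth only of $\Rel(\cal G)$: one must show that $\cal G$'s balls are controlled by those of $\Rel(\cal G)$ on the essential subgraph $X(\cal G, \mathbf R)$, and then invoke Lemma~\ref{koma} to transport the conclusion back to the original $\cal G$ from a reduction where Theorem~\ref{bll} applies literally.
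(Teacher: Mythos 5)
Your route through Theorem~\ref{bll} is the same as the paper's: choose $f\equiv 0$, an independence function $\I$ for $\Rel(\cal G)$ via the appendix, and an $m$-sparse Borel partition $\pi$; then split on the degenerate dependency degree and apply Theorem~\ref{bll} for $\De\ge 2$. Your sparse-partition construction (take the distance-$\le m$ power graph and peel off maximal Borel independent sets) is word-for-word the proof of the paper's Lemma~\ref{dupa3}, so that part is not a new contribution but a reimplementation; you could simply cite the lemma. Two small imprecisions: the degenerate case is $\De=1$, not $\De\le 1$ (when $\De=0$ the hypothesis is vacuously violated, and for $\De=1$ the constraint sets $\Var(x)$ with $\mathbf R(x)\ne b^{\Var(x)}$ are pairwise disjoint but not ``single-vertex'' as you write); the paper for that case just runs the Moser--Tardos algorithm, which trivially converges, rather than building a direct selector, though your direct selector is also fine.

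The one place you deviate is the ``principal obstacle'' paragraph, and there your proposed fix does not work. Lemma~\ref{koma} is the device the paper uses inside the proof of Theorem~\ref{main} to pad every active clause with dummy out-neighbours so that $|\Var(x)|$ is constant; it only ever \emph{adds} vertices to $\cal G$, so it can only worsen, never control, the growth of $\cal G$. It is not a mechanism for transporting a growth hypothesis from $\Rel(\cal G)$ to $\cal G$. Moreover, the transfer you want is genuinely false in the generality of ``Borel variable graph'': a clause can have arbitrarily many private, unshared variables without this affecting its degree or ball sizes in $\Rel(\cal G)$ at all, so uniform subexponential growth of $\Rel(\cal G)$ does not bound the growth of $\cal G$. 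You should be aware, though, that the paper's own proof of Corollary~\ref{cll} does not address this point either: it simply cites Theorem~\ref{bll} (whose hypothesis is on $\cal G$) under the corollary's hypothesis (which is on $\Rel(\cal G)$), implicitly working in the regime where the two are comparable — this is automatic under the symmetric reduction used to derive Theorem~\ref{into}, where $\Var(x)=N_G(x)$ and hence $\Rel(\cal G)$ is the square of $\cal G$, but it is not spelled out for general variable graphs. So your instinct to worry is sound; Lemma~\ref{koma} is just the wrong tool, and the honest fix is either to add a bounded-out-degree hypothesis on $\cal G$ or to note that the corollary is meant to be applied after the reduction where $\cal G$ and $\Rel(\cal G)$ have comparable metrics.
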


\begin{proof}

By Lemma~\ref{dupa2} in the appendix, there exists an independence function for $\Rel(\cal G)$ so we let $\I$ be such an independence function. Let $m\in \N$ be as in the previous theorem. By Lemma~\ref{dupa3} om the appendix, we can let $\pi$ to be an $m$-sparse partition of $V(\cal G)$. Finally let $f\colon V(\cal G) \to b$ be a constant function equal to $0$ everywhere.

If $\De=1$ then it means that the sets $\Var(x)$, $x\in V(\cal G)$, are pairwise disjoint. In this case it is very easy to see that the functions $\MT_0, \MT_1,\ldots$ returned by the $\N$-step Moser-Tardos algorithm for $(\cal G, \mathbf R,\I,\pi,f)$ converge with probability $1$ to a Borel function which satisfies $\mathbf R$. As always, probability is with respect to the choice of the random seed $\rnd\ssin b^{p\sstimes \N}$

If $\De\ge 2$ then  $\MT_0, \MT_1,\ldots$ also converge with probability $1$ to a Borel function which satisfies $\mathbf R$, by Theorem~\ref{bll}.
\end{proof}

\section*{Appendix A. \,\,Some standard lemmas}
\setcounter{section}{1}
\setcounter{theorem}{0}
\renewcommand{\thesection}{\Alph{section}}
We provide this appendix only for reader's convenience, as all the results are standard.  We start with the following lemma.
\blemm[borel-arrow]
Let $\Om$ be a standard Borel space and let $(U,\ga)$ be a Borel arrow on $\Om$. There exists a partition of $U$ into disjoint Borel sets $U_0,U_1,\ldots$, such that for $x\in U_0$ we have $\ga(x) = x$ and for all $i>0$ and $x\in U_i$ we have $\ga(x) \notin U_i$.
\elemm

\bpf By Kuratowski's theorem on standard Borel spaces we might as well assume that $\Om \subset [0,1)$. Let $d\colon \Om\times\Om\to [0,1)$ be the standard metric on $\Om$.  Let $U_0 := \{x\in U\colon \ga(x) = x\}$, and let $V:= U\setminus U_0$. Let $V_n := \{x\in V\colon d(x,\ga(x)) > \frac{1}{n}\}$. Let us fix $n$ and let $W := V_n$. Clearly it is enough to partition $W$ into countably many sets $W_i$, $i=0,1, \ldots$ such that for $x\in W_i$ we have $\ga(x)\notin W_i$.

For $j\in n$ let $A_j := [\frac{j}{n},\frac{j+1}{n})$. Note that $\ga(A_j)\cap A_j = \emptyset$. Thus we also have $\ga(A_j\sscap W) \cap A_j\sscap W = \emptyset$, and we may define $W_j := A_j\sscap W$.
\epf

\bcory[cacy]
Let $\cal G$ be a locally finite Borel graph without self-loops. There exists a sequence $S = ((V_0,\ga_0),(V_1,\ga_1),\ldots) $ of Borel arrows on $V(\cal G)$ such that $\cal G = \cal G(S)$, and with the property that for all $i\in \N$ we have $N_{\cal G}(U_i)\cap U_i = \emptyset$.
\ecory

\bpf
The previous lemma, together with the assumption about the self loops, show that we can find a sequence $S = ((U_0,\ga_0), (U_1,\ga_1),\ldots)$ of Borel arrows such that $\cal G = \cal G(S)$, and such that for all $i\in \N$ we have $U_i\cap \ga_i(U_i) = \emptyset$.

Let $\mathbf P$ be the set of all finite sets of natural numbers. Let $M\colon V(\cal G)\to \mathbf P$ be defined as follows. For $x\in V(\cal G)$ we let $M(x)$ be the set of those $i$ such that $x\in U_i$ and there is no $j<i$ such that $x\in U_j$ and $\ga_j(x) = \ga_i(x)$. It is not difficult to check that $M$ is a Borel function (i.e.~for $s\in \mathbf P$ we have that $M^{-1}(s)$ is a Borel set).

Now for $i\in \N$ and $s\in \mathbf P$ we define $U(i,s) := U_i \cap M^{-1}(s)$ and let $\ga(i,s)$ be the restriction of $\ga_i$ to $U(i,s)$. Clearly the sets $U(i,s)$ are Borel, and the family $\wb S$ of Borel arrows $((U(i,s),\ga(i,s)))$, $i\in \N$, $s\in \mathbf P$ has the property that $\cal G = \cal G(S) = \cal G(\wb S)$.

Let us argue that for $x\in U(i,s)$ we have $N_{\cal G}(x)\,\,\cap\, U(i,s) = \emptyset$. By the definition of $M$ we have $U(i,s)\subset \bigcap_{j\in s} U_j$  and  $N_{\cal G} (x) = \{\ga_j(x)\colon j \in s\}$. Now the lemma follows because for all $i\in \N$ we have $U_i\cap \ga_i(U_i) = \emptyset$.
\epf

Let $\cal G$ be Borel graph and let $\cal B$ be the Borel $\si$-algebra of $V(\cal G)$. Recall that an independence function for $\cal G$ is a function $\I\colon \cal B \to \cal B$ such that for $X\in \cal B$ the set $\I(X)$ is a maximal subset of $X$ which is independent in $\cal G$.

\blemm[dupa2] Let $\cal G$ be a locally finite Borel  graph. There exists an independence function $\I$ for $\cal G$.
\elemm
\bpf Let us first note that that Lemma~\ref{borel-arrow} implies in particular that the operation of removing all self-loops from $\cal G$ results in a \textit{Borel} graph. Note also that if $\I$ is an independence function for ``$\cal G$ with self-loops removed'' then it is also an independence function for $\cal G$. Thus we may just as well assume that $\cal G$ does not have self-loops.

Furthermore, we have that $\I$ is an independence function for $\cal G$ if and only if it is an independence function for the symmetrization of $\cal G$, so we may assume that $\cal G$ is symmetric.

If $V(\cal G)$ is countable we can use the following greedy algorithm: first we fix an enumeration $x_0,x_1,\ldots$ of $V(\cal G)$ and for $X\subset V(\cal G)$  we define $\I(X)$ inductively as follows. We let $X_0$ be equal to $\{x_0\}\cap X$. Assume that $X_i$ is defined for some $i\in \N$ and let $X_{i+1} := X_i \cup ((\{x_{i+1}\}\cap X) \setminus N(X_i))$.  Finally we let $\I(X) := \bigcup_{i\in \N} X_i$. It is clear that  $\I$ has all the required properties.

Corollary~\ref{cacy} allows us to adapt this  algorithm to an arbitrary Borel graph $\cal G$. Indeed, let $S = ((V_0,\ga_0),(V_1,\ga_1),\ldots)$ be a sequence of Borel arrows as in Corollary~\ref{cacy}, and let $X\in \cal B$. We let $X_0 := V_0\cap X$ and we assume that $X_i$ is defined for some $i\in \N$. Now we define $X_{i+1}: = X_i \cup  ((V_{i+1}\cap X) \setminus N (X_i))$ and $\I(X):= \bigcup_{i\in \N} X_i$. 

It is clear that $\I(X)$ is a Borel subset of $X$. The independence of $\I(X)$ follows from Corollary~\ref{cacy}. Indeed it is enough to inductively show that the sets $X_0,X_1,\ldots$ are independent sets.  For $X_0$ it follows directly from Corollary~\ref{cacy}, so let assume that we we know it for some $i$.

Consider the graph on the vertex set $X_{i+1}$  induced from $\cal G$. We need to show that it has no edges. By definition we have $X_{i+1}: = X_i \cup  ((V_{i+1}\cap X) \setminus N (X_i))$. There are no edges between points of $X_i$ and $X_i$ by the inductive assumption. There are also no edges between points of $(V_{i+1}\cap X) \setminus N (X_i)$ and $(V_{i+1}\cap X) \setminus N (X_i)$ by Corollary~\ref{cacy}. Finally there are clearly no edges between $X_i$ and $(V_{i+1}\cap X) \setminus N (X_i)$. This finishes the proof of the independence of $\I(X)$.

 Let us finally argue about the maximality of $\I(X)$. Suppose there is $x\in X\setminus \I(X)$  such that $\I(X)\cup \{x\}$ is independent. Let $i\in \N$ be the smallest natural number such that $x\in V_{i}$. If $i=0$ then clearly $x\in X_0$. Thus let us assume $i>0$. Since $\I(X)\cup \{x\}$ is independent we have $x \notin N(X_{i-1})$ and so $x \in X_{i}$. Since  $X_i\subset \I(X)$ we have $x\in \I(X)$ which finishes the proof.
\epf

\blemm[dupa3]
Let $\cal G$ be a Borel graph such that $\sup_{x\in V(\cal G)} |N(x)| <\infty$ and let $r\in \N$. There exists a Borel partition of $V(\cal G)$ which is $r$-sparse.
\elemm
\bpf
Let $\cal H$ be the Borel graph defined as follows. We let $V(\cal H) := V(\cal G)$ and we let $(x,y)\in E(\cal H)$ if the distance between $x$ and $y$ in $\cal G$ is at most $n$. Let $\I$ be an independence function for $\cal H$.

We proceed to inductively define a sequence of pair-wise disjoint Borel subsets $W_0,W_1,\ldots$ of $V(\cal H)$ as follows: let $W_0 := \I(V(\cal G))$, and let us assume that for some $i\in \N$ all the sets $W_0, \ldots, W_{i-1}$ are defined. Then we let
$$
    W_i := \I(V(\cal G)\setminus \bigcup_{j=0}^{i-1} W_j).
$$

Clearly it is enough to argue that for some $n$ we have $\bigcup_{j=0}^{n-1} W_j = V(\cal G)$. Let $m := \sup_{x\in V(\cal G)} |B_{\cal H}(x,2)|$. By the assumption on $\cal G$ we have $m<\infty$, and we claim that $\bigcup_{j=0}^{m-1} W_j = V(\cal G)$.

To show it, it is enough to argue that for every $x\in V(\cal H)$ and every $j\in m$ we have that either (i) $W_j\cap B_{\cal H}(x,2) \neq \emptyset$, or (ii) $B_{\cal H}(x,1) \subset  \bigcup_{i=0}^{j-1} W_i$.

If for some $j$ the condition (ii) does not hold then  clearly 
$$
    V(\cal G)\setminus \bigcup_{j=0}^{i-1} W_j \quad \cap \quad B_{\cal H}(x,1) \neq \emptyset.
$$
But  $\I(V(\cal G)\setminus \bigcup_{j=0}^{i-1} W_j)$ is a maximal independent set of $ V(\cal G)\setminus \bigcup_{j=0}^{i-1} W_j$ and hence must contain a point in  $B_{\cal H}(x,2)$, as claimed. This finishes the proof.

\epf

\bibliographystyle{plain}
\bibliography{bibliografia}

\end{document}

---------------------------------------------

definicje w stylu f = klamerka { 0 jak cos tam, 1 jak cos tam

h_\ga(g) = \left\{ 
	\begin{array}{l l}
  		n/2 & \quad \mbox{if $n$ is even}\\
		-(n+1)/2 & \quad \mbox{if $n$ is odd}\\ 
	\end{array} \right. 

COMMUTATIVE DIAGRAMS-----------------------------------------------

\usepackage[nohug]{diagrams} nohug daje, ze labels nad skosnymi strzalkami nie sa poobracane

prostokatny diagram przemienny uzywajac pakietu diagram.sty - nie ma go np. w instalacji na kompie w biurze w
Getyndze

\begin{diagram}
  A   	 &	\rTo^{a} &	B	\\
\dTo_{b} &               &	\dTo_{c}\\
  C      &	\rTo^{d} &	D
\end{diagram}

Diagram z diagonalnymi strzalkami

\begin{diagram}
  A	 & \rInto  &	B	& \lInto &	C	\\
	 & \rdOnto & \dDotsto  & \ldOnto		\\
    .    &         &    D
\end{diagram}

Nazwy strzalek: prefix kierunku (1-literowy d,u,r,l, lub kombinacja tych dla diagonalnych strzalek)
+ To, Onto, Into, Mapsto, Line, Impies, Dashto, Dotsto

SUSPEND ENUMERATE------------------------------------

\usepackage{mdwlist}
a potem \begin{enumerate*} \item dsfdfd \supend{enumerate*} tekst \resume{enumerate*} \item fsdfs
\end{enumerate*}

BRACES UNDER AND OVER A LIST OF ITEM------------------
Nawias nad napisem
\overbrace{1\ldots 1}^{k-1}

Nawias pod napisem: \underbrace{}_{}

XFIG ---------------------------------------------------
Insert figure from xfig:
\begin{figure}[h]%
  \resizebox{0.8\textwidth}{!}{\input{algo-diagram3.pdf_t}}
  \caption{Turing dynamical system $(X^\Si, T^\Si_X)$}
  \label{fig_algorithm}
\end{figure}

HYPHENATION----------------------------------------------
hyphenation:
${}^*$-ho\-mo\-mor\-phism

--------------------------------------------------------
sub/sup -scripts in front:
\rm {}_3^7Li

MULTLINE-------------------------------------------------
wielolinijkowe rownania z automatycznym dopasowaniem (perwasza do lewej, druga do prawej)
\begin{multline*} 
aaa \\
bbb
\end{multline*}

EQNARRAY------------------------------
wielolinijkowe rownania wraz z odpowiednim dopasowaniem linijek:

\begin{eqnarray*}
\cos 2\theta & = & \cos^2 \theta - \sin^2 \theta \\
             & = & 2 \cos^2 \theta - 1.
\end{eqnarray*}

w liniach ktore maja byc nienumerowane umiesc \nonumber przed \\

MATRICES ------------------------
macierze matrices matrix:
$$ 
\left( \begin{array}{ccc}
a & b & c \\
d & e & f \\
g & h & i \end{array} \right)
$$ 

$\left( \begin{array}{c}
1 \\
1 \\
1 \end{array}\right)$

-----------------------------
podpis nad nierownoscia:
 \stackrel{a}{\ge}